\documentclass[11pt]{amsart}
\usepackage{amsmath,amssymb,amsthm}
\usepackage[margin=1.1in]{geometry}
\usepackage{enumitem}

\newtheorem{theorem}{Theorem}[section]
\newtheorem{proposition}[theorem]{Proposition}
\newtheorem{lemma}[theorem]{Lemma}
\newtheorem{corollary}[theorem]{Corollary}
\theoremstyle{remark}
\newtheorem{remark}[theorem]{Remark}

\newcommand{\T}{\mathbb{T}}
\newcommand{\R}{\mathbb{R}}
\newcommand{\Z}{\mathbb{Z}}
\newcommand{\Q}{\mathbb{Q}}
\newcommand{\C}{\mathbb{C}}
\newcommand{\Homeo}{\operatorname{Homeo}}

\newcommand{\Fix}{\operatorname{Fix}}
\newcommand{\Aff}{\operatorname{Aff}}
\newcommand{\GL}{\operatorname{GL}}
\newcommand{\SL}{\operatorname{SL}}

\title[Topological centralizers of perturbations of ergodic toral automorphisms]{Topological centralizers of perturbations of ergodic toral automorphisms with two dimensional center}

\author{Boris Petkovi\'c}
\address{University of Banja Luka Faculty of Natural Sciences and Mathematics}
\email{boris.petkovic@pmf.unibl.org}
\date{August 2026}
\subjclass[2020]{Primary 37D30, 37C85 ; Secondary 37A25, 37C40}
\keywords{topological centralizer, accessibility, perturbations, ergodic toral automorphisms}

\begin{document}

\begin{abstract}
Let $A$ be an ergodic linear automorphism of the torus $\T^N$ whose center space has dimension two. We prove a dichotomy result that for every $f \in \mathcal{U}$, where $\mathcal{U}$ is the $C^{22}$ neighborhood of $A$ inside volume preserving diffeomorphisms on $\T^N$, exactly one of two alternatives holds. Either $f$ has the accessibility property, or $f$ is topologically conjugate to $A$ by a homeomorphism homotopic to the identity which simultaneously conjugates every homeomorphism commuting with $f$ to an affine automorphism of the torus. No irreducibility of the characteristic polynomial of $A$ is assumed. The alternatives exclude each other
because a partially hyperbolic diffeomorphism topologically conjugate to $A$ is never accessible,
so within $\mathcal{U}$ the failure of accessibility is equivalent to topological conjugacy to $A$. In the second case the centralizer of $f$ in the group of homeomorphisms of the torus is isomorphic to the group of affine transformations commuting with $A$. When the characteristic polynomial of $A$ is irreducible we show that this group is a finite extension of the unit group of an order in the number field generated by an eigenvalue of $A$, hence virtually free abelian of rank $r_1 + r_2 - 1$. The characteristic polynomial is allowed to be reducible, in which case the same description holds with the linear parts ranging over the centralizer of $A$ in $\GL(N,\Z)$. Our method uses an elementary density property of the projection of the lattice to the center space, which replaces irreducibility and which we establish for every ergodic automorphism with no restriction on the dimension of the center space.
\end{abstract}

\maketitle

\section{Introduction}

Let $\T^N = \R^N / \Z^N$ and let a matrix $A \in \SL(N,\Z)$ act on $\T^N$ in the canonical way. By a classical result of Halmos \cite{Ha} the induced automorphism is ergodic with respect to Lebesgue measure if and only if no eigenvalue of $A$ is a root of unity. A volume preserving diffeomorphism is called stably ergodic if every volume preserving diffeomorphism sufficiently close to it in an appropriate topology is ergodic. Hyperbolic autmorphisms are stably ergodic by the theory of Anosov \cite{An}, and the question whether every ergodic linear automorphism of the torus is stably ergodic, raised by Hirsch, Pugh and Shub, fits into the program of Pugh and Shub \cite{PS} on stable ergodicity of partially hyperbolic systems. A cornerstone of that program is the theorem of Burns and Wilkinson \cite{BW}, by which accessibility together with center bunching implies ergodicity for volume preserving partially hyperbolic diffeomorphisms. Rodriguez Hertz \cite{RH} answered the question affirmatively for pseudo-Anosov automorphisms, namely those ergodic $A$ whose characteristic polynomial $p_A$ is irreducible over the integers and is not a polynomial in $t^n$ for any $n \geq 2$, whose center space $E^c$, the sum of the generalized eigenspaces associated to the eigenvalues of modulus one, has dimension two. His theorem gives stable ergodicity in the $C^5$ topology when $N \geq 6$ and in the $C^{22}$ topology when $N = 4$. \\

Very recently Argentieri and Ulliana \cite{AU} removed the algebraic restrictions entirely from \cite{RH} for two dimensional center. They proved that every ergodic linear automorphism of $\T^N$ with $\dim E^c = 2$ is stably ergodic in the $C^{22}$ topology, with no irreducibility assumption on $p_A$. The key new ingredient is a minimality criterion for saturated invariant sets which replaces the only step of \cite{RH} where the pseudo-Anosov condition is used. Their theorem covers in particular all ergodic automorphisms in dimension $N \leq 5$ and $N = 7$, and all ergodic automorphisms in dimensions $N=6$ and $N=9$ none of whose eigenvalues is a Salem number, since in these cases a non Anosov ergodic automorphism automatically has two dimensional center. \\

A remarkable feature of the proofs in \cite{RH} and \cite{AU} is a rigidity alternative. For $f$ in the relevant neighborhood of $A$ either $f$ has the accessibility property, which means that any two points of the torus can be joined by a path consisting of finitely many arcs each of which is contained in a single strong stable or strong unstable leaf of $f$, or the strong stable and strong unstable bundles of $f$ are jointly integrable. In the jointly integrable case Rodriguez Hertz proved, for pseudo-Anosov $A$, that $f$ is topologically conjugate to $A$. In \cite{AU} the jointly integrable case is treated by linearizing the associated holonomy action through the theorems of Herman and Moser and deriving a $C^1$ conjugacy between the strong foliations of $f$ and the linear foliations of $A$, which transfers essential accessibility and hence ergodicity, and the full dynamical conjugacy is not stated there. Our first contribution, Proposition \ref{prop:conjugacy} below, is to carry out the construction of the dynamical conjugacy for the whole class of \cite{AU}. The passage from the pseudo-Anosov case to the general two dimensional center case requires replacing the use of irreducibility at one point of the argument of \cite{RH}, and we do so by an elementary density lemma for the projection of the lattice $\Z^N$ to the center space, Lemma \ref{lem:density} below. We establish that lemma for every ergodic automorphism of the torus with nontrivial center space, with no restriction on the dimension of the center. \\

The purpose of this replacement is a statement about commuting maps. For a homeomorphism $f$ of a manifold $M$ let $Z(f)$ denote the centralizer of $f$ in $\Homeo(M)$, that is the group of all homeomorphisms $g$ of $M$ with $f \circ g = g \circ f$. Smale \cite{Sm} asked whether a generic diffeomorphism has trivial centralizer, and Bonatti, Crovisier and Wilkinson \cite{BCW} proved that this is the case in the $C^1$ topology. For partially hyperbolic systems of algebraic origin a program of centralizer rigidity has emerged in the work of Damjanovi\'c, Wilkinson and Xu \cite{DWX}, continued in \cite{DWWX}, in which the centralizer of a perturbation is either small or the perturbation is itself of algebraic nature. For hyperbolic $A$ the centralizer of every $C^1$ small perturbation is understood classically, since structural stability provides a conjugacy to $A$ for every such perturbation and the theorem of Adler and Palais \cite{AP} then identifies the centralizer with the group of affine transformations commuting with $A$. The theorem below is the exact analogue for two dimensional center, with the accessibility alternative playing the role that structural stability plays in the hyperbolic case. \\

We fix the following notation. Given $A \in \GL(N,\Z)$ with no eigenvalue equal to one, let
\[
\Fix(A) = \{ w \in \T^N \, \colon \, A w = w \},
\]
a finite subgroup of $\T^N$ of order $|p_A(1)|$, and let
\[
\Aff_A = \{ x \mapsto B x + w \, \colon \, B \in \GL(N,\Z), \, A B = B A, \, w \in \Fix(A)\}
\]

denote a subgroup of the group of affine transformations of $\T^N$. Throughout the paper $A \in \SL(N,\Z)$ denotes an ergodic linear automorphism of $\T^N$ with $\dim E^c = 2$, and $\mathcal{U}$ denotes the neighborhood of $A$ in the space of volume preserving $C^{22}$ diffeomorphisms of $\T^N$ provided by the theorem of Argentieri and Ulliana, recalled as Theorem \ref{thm:au} below.

\begin{theorem}\label{thm:main}
Let $f \in \mathcal{U}$. Then exactly one of the following two assertions holds.
\begin{enumerate}
\item The diffeomorphism $f$ has the accessibility property.
\item There is a homeomorphism $H$ of $\T^N$ homotopic to the identity such that $H \circ f \circ H^{-1} = A$ and such that for every homeomorphism $g$ of $\T^N$ commuting with $f$ the map $H \circ g \circ H^{-1}$ is the affine transformation $x \mapsto B x + w$, where $B \in \GL(N,\Z)$ is the automorphism induced by $g$ on the first homology group of the torus, which necesseraly satisfies $AB=BA$, and $w \in \Fix(A)$. In particular conjugation by $H$ is a group isomorphism between $Z(f)$ and $\Aff_A$.
\end{enumerate}
\end{theorem}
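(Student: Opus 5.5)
The argument has three parts: reduce to the jointly integrable case, build the conjugacy, and then rigidify every commuting homeomorphism using the Adler--Palais argument together with the density lemma.

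\emph{The dichotomy and exclusivity.} By Theorem \ref{thm:au}, for $f\in\mathcal{U}$ either $f$ is accessible or $E^s_f\oplus E^u_f$ is jointly integrable. In the jointly integrable case Proposition \ref{prop:conjugacy} supplies a homeomorphism $H$ homotopic to the identity with $H\circ f\circ H^{-1}=A$. I expect $H$ to carry the strong stable and strong unstable foliations of $f$ to the linear foliations $E^s_A$ and $E^u_A$; this should come from Franks--Manning semiconjugacy uniqueness, since $H$ is the unique conjugacy in its homotopy class and maps stable sets to stable sets.

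To see that the two alternatives exclude each other, suppose $f$ satisfies (2). Then $H$ maps $su$-paths of $f$ to $su$-paths of $A$. The accessibility class of $0$ under $A$ is contained in the projection of $E^s_A\oplus E^u_A$, which is a proper immersed subspace and not all of $\T^N$. Hence $f$ is not accessible.

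\emph{The centralizer in case (2).} Let $g\in Z(f)$ and set $\tilde g=H g H^{-1}$, which commutes with $A$. Let $B\in \GL(N,\Z)$ be the action of $\tilde g$ on $H_1(\T^N)$; this equals the action of $g$, because $H\simeq \mathrm{id}$. Lifting to $\R^N$, the relation $\tilde g A=A\tilde g$ gives $BA=AB$ on homology. Write $\tilde g(x)=Bx+\phi(x)$ with $\phi$ continuous and $\Z^N$-periodic. Then
\[
\phi(Ax)=A\phi(x)+c, \qquad c\in\Z^N .
\]
Decompose along $E^s\oplus E^c\oplus E^u$. On the hyperbolic components, iterating forward on the stable part and backward on the unstable part, as in Adler--Palais, forces those components of $\phi$ to be constant.

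The center component is the main obstacle. It satisfies $\phi^c(Ax)=A_c\phi^c(x)+c^c$, and $A_c$ is an isometry up to a conjugacy, namely a rotation or a reflection-type elliptic map, since $\dim E^c=2$ and the eigenvalues are not roots of unity. So the iteration does not contract. I plan to use that $\phi^c-v$, for $v$ the fixed point of $y\mapsto A_cy+c^c$, is an $A$-equivariant continuous map into a compact set. Fix $x$ and pass along a subsequence $A^{n_k}$ converging in $\T^N$. Then $\phi^c$ has equal values on points that are identified modulo the lattice projected to $E^c$. The density lemma (Lemma \ref{lem:density}) says this projection is dense in $E^c$. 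Combined with uniform continuity and the invariance of $\phi^c$ under translations by $E^s\oplus E^u$ (obtained from the stable/unstable constancy argument applied to differences $\phi(x+v)-\phi(x)$), this should force $\phi^c$ to be constant.

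Hence $\tilde g(x)=Bx+w$ for a constant $w$. Commutation with $A$ then gives $Aw=w$ modulo $\Z^N$, that is, $w\in\Fix(A)$. Conversely, every element of $\Aff_A$ commutes with $A$, so $H^{-1}\Aff_A H\subset Z(f)$, and conjugation by $H$ is the claimed isomorphism.
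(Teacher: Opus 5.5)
Your argument is correct in substance. Its architecture is the paper's:
\begin{enumerate}
\item Theorem \ref{thm:au} and Proposition \ref{prop:conjugacy} give $H$.
\item After conjugation by $H$, an Adler--Palais argument rigidifies each commuting $g$, with $H\simeq\mathrm{id}$ identifying the homology actions.
\item Exclusivity comes from $H$ pushing strong leaves of $f$ into stable and unstable sets of $A$.
\end{enumerate}

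The genuine difference is in the center component. The paper (Lemma \ref{lem:transfer}) handles it by Fourier analysis: the norms $\|a_k\|$ are constant along the infinite $A^T$-orbits, so Riemann--Lebesgue forces $a_k=0$. You argue dynamically instead, and the argument works:
\begin{enumerate}
\item Let $\psi=\phi^c-v$, which is exactly equivariant. For $u\in E^s$, uniform continuity gives $(A^c)^n\bigl(\psi(x+u)-\psi(x)\bigr)=\psi(A^nx+A^nu)-\psi(A^nx)\to 0$. The left side has constant invariant norm, so $\psi(x+u)=\psi(x)$. The same holds for $u\in E^u$ with $n\to-\infty$.
\item Hence the lift of $\psi$ factors through $x\mapsto x^c$ and is invariant under the translations by $n^c$. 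These are dense by Lemma \ref{lem:density}, so $\psi$ is constant.
\item That constant is fixed by $A^c$, hence zero.
\end{enumerate}
This is exactly the mechanism of Step 2 of Proposition \ref{prop:conjugacy}, so it gives one harmonic-analysis-free device for both places. The price is that the rigidity of Section 3 then rests on the density lemma, and through it on Kronecker's theorem. The paper's Fourier route needs only that $A^T$-orbits of nonzero integer vectors are infinite. Your write-up should state the chain in this order. The ``subsequence $A^{n_k}$'' step plays no role and should be dropped. Also, $A^c$ is conjugate to an irrational rotation (Lemma \ref{lem:linearalgebra}), not a reflection-type map.

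On exclusivity, drop the appeal to Franks--Manning uniqueness. $A$ is not Anosov, $H$ is unique only up to translations by $\Fix(A)$ (Remark \ref{rem:uniqueness}), and uniqueness is not what moves leaves anyway. What makes ``stable sets go to stable sets'' usable is the metric characterization proved in Proposition \ref{prop:exclusive}: $d(A^na,A^nb)\to0$ as $n\to+\infty$ if and only if $b-a\in p(E^s)$. Its proof uses that the integer corrections $x_{n+1}-Ax_n$ of minimal lifts eventually vanish, and that $A^c$ does not contract. With it, uniform continuity of $H$ puts each strong stable (unstable) leaf of $f$ into a coset of $p(E^s)$ (resp.\ $p(E^u)$). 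This containment, not an identification of foliations, is all the argument needs.
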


We emphasize that in the second alternative the commuting map $g$ is an arbitrary homeomorphism. No smoothness, no volume preservation and no proximity of $g$ to a linear model are required, and the characteristic polynomial of $A$ may be reducible. The exclusivity of the two alternatives rests on Proposition \ref{prop:exclusive} below, which shows that a partially hyperbolic diffeomorphism topologically conjugate to $A$ is never accessible. For volume preserving smooth perturbations sufficiently close to the automorphism this equivalence between the failure of accessibility and topological conjugacy was observed by Sandfeldt \cite{Sa} as a consequence of \cite{RH}, and Proposition \ref{prop:exclusive} provides a general form of the statement, with an elementary self contained proof, valid for an arbitrary partially hyperbolic diffeomorphism conjugate to $A$ by an arbitrary homeomorphism, without closeness to $A$, volume preservation, or hypotheses on the characteristic polynomial. Together with Proposition \ref{prop:conjugacy} this gives the following characterization.

\begin{corollary}\label{cor:equivalence}
Let $f \in \mathcal{U}$. The following three conditions are equivalent.
\begin{enumerate}
\item The diffeomorphism $f$ does not have the accessibility property.
\item The diffeomorphism $f$ is topologically conjugate to $A$.
\item The strong stable and strong unstable bundles of $f$ are jointly integrable.
\end{enumerate}
\end{corollary}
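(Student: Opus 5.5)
The corollary follows by combining three ingredients the paper sets up: the dichotomy of Argentieri and Ulliana (Theorem \ref{thm:au}), the construction of a conjugacy in the jointly integrable case (Proposition \ref{prop:conjugacy}), and the non-accessibility of conjugates of $A$ (Proposition \ref{prop:exclusive}). I will prove the cycle of implications (1)$\Rightarrow$(3)$\Rightarrow$(2)$\Rightarrow$(1).

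\emph{(1)$\Rightarrow$(3).} Theorem \ref{thm:au} says that for every $f \in \mathcal{U}$ either $f$ is accessible, or its strong stable and strong unstable bundles are jointly integrable. If $f$ is not accessible, the first alternative fails, so the bundles are jointly integrable.

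\emph{(3)$\Rightarrow$(2).} This is exactly the content of Proposition \ref{prop:conjugacy}. In the jointly integrable case it produces a homeomorphism $H$, homotopic to the identity, with $H \circ f \circ H^{-1} = A$. This is the step where the density Lemma \ref{lem:density} replaces irreducibility. For the corollary it is invoked as a black box.

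\emph{(2)$\Rightarrow$(1).} Every $f \in \mathcal{U}$ is partially hyperbolic, since $\mathcal{U}$ is a $C^{22}$-small neighborhood of the partially hyperbolic automorphism $A$. Proposition \ref{prop:exclusive} states that a partially hyperbolic diffeomorphism topologically conjugate to $A$, by any homeomorphism, is never accessible. So (2) implies (1).

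The one point needing care is the hypothesis check in the last step. Condition (2) allows an arbitrary conjugating homeomorphism, not necessarily one homotopic to the identity. Proposition \ref{prop:exclusive} is stated with no restriction on the homeomorphism, so this causes no difficulty. It does mean that the last step must use Proposition \ref{prop:exclusive} itself rather than the conjugacy produced in Theorem \ref{thm:main}. No step is a genuine obstacle: all the real work lies in the two propositions, and the corollary is bookkeeping of the implications.
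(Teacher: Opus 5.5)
There is a gap at your step (3)$\Rightarrow$(2). Proposition \ref{prop:conjugacy} is not stated under joint integrability. Its hypothesis is that $f$ does not have the accessibility property. The structure it relies on (the closed codimension two accessibility classes and Facts 1--3) is supplied by the second alternative of Theorem \ref{thm:au}, which you can only enter once accessibility has been ruled out. The cited results therefore give you only (1)$\Rightarrow$(3), (1)$\Rightarrow$(2) and (2)$\Rightarrow$(1), and no implication starts from condition (3). The missing piece is (3)$\Rightarrow$(1), and it is the one part of the corollary that is not a direct citation, so calling the argument pure bookkeeping hides exactly the step that needs an argument.

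The repair is short, and it is what the paper does. Suppose $E^s_f$ and $E^u_f$ are jointly integrable. Then every strong stable and every strong unstable leaf lies in a leaf of the foliation integrating $E^s_f \oplus E^u_f$. Hence every path made of strong stable and strong unstable arcs stays in one such leaf, and each accessibility class is contained in a single leaf. A leaf is an injectively immersed submanifold of dimension $N-2$, so it has Lebesgue measure zero and cannot be all of $\T^N$. Thus $f$ is not accessible.

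With (3)$\Rightarrow$(1) in place, your step becomes (3)$\Rightarrow$(1)$\Rightarrow$(2) via Proposition \ref{prop:conjugacy}. The rest of your argument stands, including your correct points that Proposition \ref{prop:exclusive} allows an arbitrary conjugating homeomorphism and that every $f \in \mathcal{U}$ is partially hyperbolic.
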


\begin{remark}\label{rem:regularity-roles}
The regularity in Theorem \ref{thm:main} plays an asymmetric role. The $C^{22}$ hypothesis, or $C^5$ in the pseudo-Anosov case with $N \geq 6$, is what establishes the accessibility alternative through \cite{AU} and \cite{RH}, and it therefore determines which case of the dichotomy $f$ lies in. The conclusions themselves are purely topological. The group $Z(f)$ consists of arbitrary homeomorphisms, the conjugacy $H$ is only continous, and the affine rigidity behind the second case, the theorem of Adler and Palais, is a topological statement, as is Proposition \ref{prop:exclusive}. Once the case of the dichotomy is known, the topological picture is complete and requires no further smoothness. This contrasts with the smooth centralizer classification of \cite{Sa}, which asks which $C^\infty$ diffeomorphisms commute with $f$ and accordingly works in the smooth category throughout.
\end{remark}

The group $\Aff_A$ has a transparent algebraic structure, described in Section 3. Combining it with Theorem \ref{thm:main} yields the following consequences. Here $\lambda$ denotes an eigenvalue of $A$ generating the field $K = \Q(\lambda)$ with $r_1$ real embeddings and $r_2$ pairs of complex conjugate embeddings.

\begin{corollary}\label{cor:structure}
Let $f \in \mathcal{U}$ and suppose that $f$ does not have the accessibility property. Then $Z(f)$ contains a finite normal subgroup isomorphic to $\Fix(A)$ with quotient isomorphic to the centralizer of $A$ in $\GL(N,\Z)$, and this extension splits. If $p_A$ is irreducible the quotient is isomorphic to the unit group of an order in $K$, so $Z(f)$ is virtually free abelian of rank $r_1 + r_2 - 1$. If $N = 4$ then $p_A$ is automatically irreducible with $r_1 = 2$ and $r_2 = 1$, so $Z(f)$ is virtually isomorphic to $\Z^2$ and $f$ embeds in a rank two abelian group of commuting homeomorphisms of $\T^4$.
\end{corollary}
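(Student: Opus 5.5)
By Theorem \ref{thm:main}, since $f$ does not have the accessibility property, conjugation by $H$ gives a group isomorphism $Z(f)\cong\Aff_A$. All three assertions are therefore statements about $\Aff_A$, and I will prove them there.

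\emph{The split extension.} Consider the map $\pi\colon\Aff_A\to C(A)=\{B\in\GL(N,\Z): AB=BA\}$ sending $x\mapsto Bx+w$ to $B$. Composition gives $(B,w)(B',w')=(BB',Bw'+w)$, so $\pi$ is a homomorphism. It is surjective because $x\mapsto Bx$ lies in $\Aff_A$, and this same assignment $B\mapsto(x\mapsto Bx)$ is a section. The kernel consists of the translations by elements of $\Fix(A)$. It is normal, and it is finite of order $|p_A(1)|\neq 0$, since $A$ has no eigenvalue one by ergodicity. Hence $\Aff_A\cong\Fix(A)\rtimes C(A)$. One point needs checking: $B\in C(A)$ preserves $\Fix(A)$, because $ABw=BAw=Bw$.

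\emph{The irreducible case.} If $p_A$ is irreducible, it is the minimal polynomial of $\lambda$. I will use the Latimer--MacDuffee correspondence. Choose an eigenvector $v\in K^N$ of $A$ with $Av=\lambda v$ and $\lambda$-coordinates in $K$, and let $M=\Z v_1+\dots+\Z v_N\subset K$ be the corresponding full lattice. Any $B$ commuting with $A$ preserves the one-dimensional eigenspace, so $Bv=\mu v$ with $\mu\in K$. Then $B\in\GL(N,\Z)$ if and only if $\mu M=M$, which means $\mu$ is a unit of the multiplier ring $\mathcal O=\{\mu:\mu M\subset M\}$, an order in $K$. This gives $C(A)\cong\mathcal O^\times$. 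By Dirichlet's unit theorem for orders, $\mathcal O^\times\cong \mu(\mathcal O)\times\Z^{r_1+r_2-1}$ with the torsion part finite. So $\Aff_A$ is virtually $\Z^{r_1+r_2-1}$, using the finite kernel and finite torsion.

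\emph{The case $N=4$.} Since $A$ is ergodic and not hyperbolic with $\dim E^c=2$, the center eigenvalues are a pair $\lambda,\bar\lambda$ with $|\lambda|=1$ and $\lambda$ not a root of unity. Because $\det A=1$ and the eigenvalues are closed under inversion, the other two are real, namely $\mu$ and $\mu^{-1}$ with $|\mu|\neq 1$.

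Suppose $p_A$ factored over $\Q$. By Kronecker's theorem, a monic integer factor all of whose roots lie on the unit circle has only roots of unity as roots, which is excluded. So the factor containing $\lambda$ must also contain $\bar\lambda$ and some real root. A degree-three factor would have constant term $\pm\lambda\bar\lambda\mu=\pm\mu$, forcing $\mu\in\Z$ with $|\mu|>1$; then the remaining root $\mu^{-1}$ would not be an algebraic integer, a contradiction. The same argument rules out every other splitting, so $p_A$ is irreducible.

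It follows that $K$ has signature $r_1=2$, $r_2=1$, so the rank is $2$. Finally, $A$ itself lies in $C(A)$ and has infinite order, so $Z(f)$ contains a subgroup isomorphic to $\Z^2$ that contains a power of $f$. Passing to a finite-index subgroup of the lattice $\mathcal O^\times/\text{torsion}$ that contains the image of $A$ gives a rank-two free abelian group containing $f$, because $C(A)$ is abelian in this case.

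\emph{Main obstacle.} I expect the most delicate steps to be the Latimer--MacDuffee identification of $C(A)$ with $\mathcal O^\times$ and the irreducibility argument for $N=4$. The split extension is formal.
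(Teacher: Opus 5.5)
Your proposal is correct. Its skeleton matches the paper's proof:

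\begin{enumerate}
\item transfer everything to $\Aff_A$ via Theorem \ref{thm:main};
\item read off the split extension formally, as in Corollary \ref{cor:centralizerA};
\item compute $C_{\GL(N,\Z)}(A)$ and the signature of $K$.
\end{enumerate}

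You differ in how the two arithmetic inputs are obtained, which the paper takes from Lemmas \ref{lem:commutant} and \ref{lem:quartic}.

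\textbf{The unit group.} The paper uses that irreducibility gives $A$ a cyclic vector. Hence the rational centralizer is $\Q[A]\cong K$ and the order is $\mathcal{O}_A=\Q[A]\cap M_N(\Z)$. Your Latimer--MacDuffee route instead realizes the order as the multiplier ring of the lattice spanned by the coordinates of a $\lambda$-eigenvector, which ties it to the ideal class of $A$. You should state that these coordinates are $\Q$-linearly independent, since this is what makes $B\mapsto\mu$ injective and lets you rebuild $B$ from $\mu$. The check is short: if $0\neq c\in\Q^N$ and $c^Tv=0$, then $((A^T)^kc)^Tv=\lambda^kc^Tv=0$ for all $k$. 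By irreducibility the vectors $(A^T)^kc$ span $\Q^N$, forcing $v=0$.

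\textbf{Irreducibility when $N=4$.} The paper shows that the minimal polynomial of $\lambda$ is reciprocal of even degree at least four, an argument valid in any dimension. Your constant-term argument excluding a cubic factor is specific to $N=4$ but equally short. One small point: you invoke closure of the eigenvalues under inversion before irreducibility is known. This is not needed. Since $\lambda\bar\lambda=1$ and $\det A=1$, you get $\mu_1\mu_2=1$. A nonreal $\mu_1$ would then force $\mu_2=\bar\mu_1$ and hence $|\mu_1|=1$, a contradiction. This is exactly how the paper argues.

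\textbf{Embedding $f$.} The paper simply takes $H^{-1}C_{\GL(N,\Z)}(A)H$, an abelian group of rank two containing $f$. Your stronger claim, a free abelian group of rank two containing $f$ itself, is true. However, it is cleaner to take $\langle A,\psi\rangle$, with $\psi$ a unit whose image modulo torsion is independent of that of $A$, than to speak of a sublattice of the quotient.
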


\begin{corollary}\label{cor:criterion}
Let $f \in \mathcal{U}$. If $f$ commutes with a homeomorphism of infinite order that acts trivially on the first homology group of $\T^N$, or if $Z(f)$ is uncountable, then $f$ has the accessibility property. If moreover $p_A$ is irreducible and $Z(f)$ is not virtually abelian, then $f$ has the accessibility property.
\end{corollary}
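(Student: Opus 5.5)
We argue by contraposition and use Theorem \ref{thm:main} together with the structure of $\Aff_A$ from Corollary \ref{cor:structure}. Suppose $f \in \mathcal{U}$ does not have the accessibility property. By Theorem \ref{thm:main} we are then in the second alternative, so there is a homeomorphism $H$ homotopic to the identity such that $g \mapsto H \circ g \circ H^{-1}$ is a group isomorphism $Z(f) \to \Aff_A$. Under this isomorphism $g$ goes to $x \mapsto Bx + w$, where $B$ is the action of $g$ on $H_1(\T^N)$. We show that each of the three hypotheses of Corollary \ref{cor:criterion} contradicts this description.

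First, take $g \in Z(f)$ acting trivially on $H_1(\T^N,\Z)$. Then $B = I$, so $H \circ g \circ H^{-1}$ is the translation $x \mapsto x + w$ with $w \in \Fix(A)$. Since $A$ has no eigenvalue equal to one, $\Fix(A)$ is a finite group of order $|p_A(1)|$. Hence $g^{k} = \mathrm{id}$ for $k = |p_A(1)|$, and $g$ has finite order, a contradiction.

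Second, $\Aff_A$ is countable. Indeed, the centralizer of $A$ in $\GL(N,\Z)$ is a subset of the countable set $\GL(N,\Z)$, and $\Fix(A)$ is finite. So $Z(f) \cong \Aff_A$ cannot be uncountable.

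Third, suppose $p_A$ is irreducible. By Corollary \ref{cor:structure}, $Z(f)$ is virtually free abelian of rank $r_1 + r_2 - 1$, in particular virtually abelian. This contradicts the assumption that $Z(f)$ is not virtually abelian.

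In all three cases the hypothesis forces $f$ to have the accessibility property. The only substantive input is the identification of the translation part of $\Aff_A$ with the finite group $\Fix(A)$. No step here is a real obstacle: all the difficulty lies in Theorem \ref{thm:main} and in the unit-group description used in Corollary \ref{cor:structure}, which we take as given.
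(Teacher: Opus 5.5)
Your proposal is correct and follows essentially the same contrapositive argument as the paper. Theorem \ref{thm:main} identifies $Z(f)$ with $\Aff_A$; a trivial homology action yields a translation by an element of the finite group $\Fix(A)$; $\Aff_A$ is countable; and in the irreducible case virtual abelianness comes from Corollary \ref{cor:centralizerA}, which you reach through Corollary \ref{cor:structure}.
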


\begin{corollary}\label{cor:dynamics}
Let $f \in \mathcal{U}$ be without the accessibility property and let $g$ be a homeomorphism of $\T^N$ commuting with $f$, with induced matrix $B$ on the first homology group. If no eigenvalue of $B$ is a root of unity then $g$ is topologically conjugate to the automorphism induced by $B$. Consequently $g$ is topologically mixing, the periodic points of $g$ are dense, and the topological entropy of $g$ equals the sum of $\log |\beta|$ over the eigenvalues $\beta$ of $B$ with $|\beta| > 1$, counted with multiplicity.
\end{corollary}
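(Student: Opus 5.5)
By Theorem \ref{thm:main}, since $f$ is not accessible we are in the second alternative. Hence there is a homeomorphism $H$ with $H f H^{-1} = A$, and $H g H^{-1}(x) = Bx + w$ with $AB = BA$. The plan is to transfer every dynamical property from the affine map $L(x) = Bx + w$ to $g$ through the conjugacy $H$. Topological mixing, density of periodic points and topological entropy are all invariant under topological conjugacy. It therefore suffices to show two things: that $L$ is topologically conjugate to the linear automorphism $x \mapsto Bx$, and that this automorphism has the stated properties.

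\textbf{Step 1: remove the translation.} The hypothesis says no eigenvalue of $B$ is a root of unity, so $1$ is not an eigenvalue and $B - I$ is invertible over $\R$. The induced map $B - I \colon \T^N \to \T^N$ is then a surjective endomorphism of the torus. Choose $v \in \T^N$ with $(B - I)v = -w$, and let $T(x) = x + v$. Then
\[
T^{-1} L T (x) = B(x+v) + w - v = Bx + (B-I)v + w = Bx .
\]
So $g$ is conjugate to the automorphism induced by $B$ via the homeomorphism $T^{-1} H$. Note that only the property $\det(B - I) \neq 0$ is used here; we do not need $w \in \Fix(A)$.

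\textbf{Step 2: dynamics of the linear automorphism.} By the Halmos criterion recalled in the introduction, $B$ is ergodic with respect to Lebesgue measure. For toral automorphisms, ergodicity is classically equivalent to mixing, indeed to the Kolmogorov property. Measure-theoretic mixing with respect to a fully supported invariant measure implies topological mixing. Density of periodic points holds for every automorphism of the torus: each rational point $x \in (\frac{1}{q}\Z)^N/\Z^N$ lies in the finite $B$-invariant set $(\frac1q\Z)^N/\Z^N$, so it is periodic, and rational points are dense. For entropy, Bowen's formula, or equivalently Yuzvinskii's formula, gives $h_{top}(B) = \sum_{|\beta|>1} \log|\beta|$ counted with multiplicity. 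Conjugacy invariance then transfers all three properties to $g$.

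\textbf{Main obstacle.} The only genuinely nontrivial inputs are the classical facts that ergodic toral automorphisms are mixing and the entropy formula, and both can be cited. The one point needing care is that $B \in \GL(N,\Z)$ may have determinant $-1$, whereas the Halmos criterion is stated in the introduction for $\SL(N,\Z)$. That criterion, and Bowen's formula, hold verbatim for $\GL(N,\Z)$, so I would simply remark on this rather than reprove it.
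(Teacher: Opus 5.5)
Your proposal is correct and follows the paper's proof essentially step by step. Both arguments remove the translation by solving $(B-I)v=-w$, using surjectivity of the endomorphism induced by $B-I$, and then transfer three facts through the conjugacy $T^{-1}H$: mixing (Halmos plus full support of Haar measure), periodicity of the dense torsion points, and the classical entropy formula.
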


Two comments on the relation with previous work are in order. First, the affine rigidity used here is classical. The theorem of Adler and Palais \cite{AP} applies to every ergodic automorphism of the torus, hyperbolic or not, and Walters \cite{W1, W2} proved that ergodicity is precisely the necessary and sufficient condition and extended the statement to compact abelian groups. We include a complete proof in Section 3, both to keep the paper self contained and because the transfer lemma underlying it is reused in the uniqueness arguments of Section 5. Second, the closest result in the literature is due to Sandfeldt \cite{Sa}, who classified the possible smooth centralizers of volume preserving diffeomorphisms $C^1$ close to an ergodic irreducible toral automorphism with two dimensional isometric center, proving that the image of the smooth centralizer in the automorphism group of the first homology has rank at most $r_1 + r_2 - 1$ and that $f$ is $C^\infty$ conjugate to the automorphism whenever this rank is attained. The results of the present paper differ in several respects. We work with the full topological centralizer, allowing arbitrary commuting homeomorphisms with no smoothness or volume preservation assumptions. We compute the centralizer exactly rather than bounding its rank. We allow reducible characteristic polynomials, and the mechanism of proof is different, resting on the accessibility alternative and the theorem of Adler and Palais rather than on rigidity of higher rank abelian actions in the sense of \cite{DK}. On the other hand our conclusions require $C^{22}$ closeness and are confined to the case of the alternative where accessibility fails. However, we combine our results with those of \cite{Sa}. Corollary \ref{cor:smooth} below chains Proposition \ref{prop:exclusive} with the dichotomy of \cite{Sa} and shows that in dimension four every smooth accessible perturbation has virtually trivial smooth centralizer, while every non accessible one has topological centralizer of rank two, so the two sides of the alternative are separated by centralizers in different categories.

The paper is organized as follows. Section 2 collects the algebraic preliminaries, among them the density lemma, and the statement from \cite{AU} that we use. Section 3 establishes the affine rigidity of centralizers of ergodic automorphisms with semisimple center. Section 4 contains the construction of the topological conjugacy in the jointly integrable case. Section 5 establishes the exclusivity of the alternative and contains the proofs of Theorem \ref{thm:main} and its corollaries together with statements about the induced group actions and the behavior of accessibility classes. Section 6 discusses the limits of the method.

\section{Preliminaries}

We write $p \colon \R^N \to \T^N$ for the canonical projection and denote Lebesgue measure on the torus by $m$. For $A \in \GL(N,\Z)$ let
\[
\R^N = E^s \oplus E^c \oplus E^u
\]
be the splitting induced by $A$ into the sums of the stable, center and unstable space, respectively. We write $A^s$, $A^c$ and $A^u$ for the corresponding restrictions of $A$, and $v = v^s + v^c + v^u$ for the components of a vector $v \in \R^N$. The transpose of $A$ is denoted $A^T$.

\begin{lemma}\label{lem:linearalgebra}
Let $A \in \GL(N,\Z)$ induce an ergodic automorphism of $\T^N$ with $\dim E^c = 2$. Then the following assertions hold.
\begin{enumerate}
\item The eigenvalues of $A$ of modulus one form a single pair $\lambda$, $\bar\lambda$ with $\lambda$ nonreal and not a root of unity, each of algebraic multiplicity one.
\item The restriction $A^c$ is semisimple, it preserves a Euclidean inner product on $E^c$, and with respect to that inner product it is a rotation by an angle which is an irrational multiple of $2\pi$.
\item Every real matrix commuting with $A$ preserves $E^s$, $E^c$ and $E^u$.
\item 1 is not an eigenvalue of $A$ and $\det (A - I) = \pm p_A(1) \neq 0$.
\item For every $m \in \Z^N$ with $m \neq 0$ the orbit $\{ (A^T)^k m \, \colon \, k \in \Z \}$ is infinite, hence unbounded.
\end{enumerate}
\end{lemma}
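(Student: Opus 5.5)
We prove the five assertions in order. Each uses only Halmos' criterion (no eigenvalue of $A$ is a root of unity) and the fact that $p_A$ has integer coefficients.

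For (1), $E^c$ is the sum of the generalized eigenspaces for eigenvalues of modulus one, so the number of such eigenvalues, counted with algebraic multiplicity, equals $\dim E^c = 2$. A real eigenvalue of modulus one would be $\pm 1$, a root of unity, which ergodicity excludes. So the eigenvalues of modulus one are nonreal, and since nonreal eigenvalues of a real matrix come in conjugate pairs, they form exactly one pair $\lambda,\bar\lambda$, each of multiplicity one. By ergodicity $\lambda$ is not a root of unity.

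For (2), $A^c$ acts on the real plane $E^c$ with the distinct eigenvalues $\lambda,\bar\lambda$, so it is diagonalizable over $\C$. Write $\lambda = e^{i\theta}$. In a real basis $\{\mathrm{Re}\, v, \mathrm{Im}\, v\}$, where $v$ is a $\lambda$-eigenvector, the matrix of $A^c$ is the rotation by $\pm\theta$. Declaring this basis orthonormal gives the invariant inner product. Since $\lambda$ is not a root of unity, $\theta/2\pi$ is irrational.

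For (3), suppose $BA = AB$. Then $B$ preserves every generalized eigenspace of $A$ over $\C$, because $B$ commutes with $(A-\mu I)^k$ and so maps its kernel into itself. $E^s$, $E^c$ and $E^u$ are the real parts of sums of such spaces grouped by the modulus of $\mu$. Equivalently, each is the kernel of $q(A)$, where $q$ is the real factor of $p_A$ collecting the roots of the relevant modulus. Hence $B$ preserves each of the three spaces.

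For (4), $1$ is a root of unity, so it is not an eigenvalue of $A$. Then $\det(A-I) = (-1)^N \det(I-A) = (-1)^N p_A(1)$, with $p_A(t) = \det(tI - A)$, and this is nonzero.

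For (5), which is the only step needing an argument beyond linear algebra: suppose the orbit of some $m \neq 0$ under $A^T$ is finite. Since $A^T$ is invertible, $A^T$ permutes this finite orbit, so $(A^T)^k m = m$ for some $k \geq 1$. Then $1$ is an eigenvalue of $(A^T)^k$, so some eigenvalue $\mu$ of $A^T$, which is also an eigenvalue of $A$, satisfies $\mu^k = 1$. This contradicts Halmos' criterion. Finally, an infinite subset of the lattice $\Z^N$ cannot be bounded, since a bounded region contains only finitely many lattice points. So the orbit is unbounded.

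No step is a real obstacle. The point needing the most care is (3): one must pass from the complex generalized eigenspaces to the real splitting, and the cleanest way is to describe each of $E^s$, $E^c$, $E^u$ as the kernel of a polynomial in $A$ with real coefficients.
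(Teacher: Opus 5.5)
Your proposal is correct and follows the paper's proof essentially step by step. Both use Halmos' criterion with the multiplicity count for (1), distinct eigenvalues giving a rotation normal form for (2), invariance of generalized eigenspaces under commuting matrices for (3), and the argument that a finite orbit forces $(A^T)^k m = m$, hence a root-of-unity eigenvalue, for (5). Your additional description of $E^s$, $E^c$, $E^u$ as kernels of real polynomials in $A$ is a slightly cleaner way to reach the real splitting in (3), but it does not change the argument.
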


\begin{proof}
A real number of modulus one equals $1$ or $-1$, and both of these are roots of unity. Therefore ergodicity forces every eigenvalue of modulus one to be nonreal. Nonreal eigenvalues of a real matrix occur in conjugate pairs with equal algebraic multiplicities, and $\dim E^c$ equals the sum of the algebraic multiplicities of the eigenvalues of modulus one. Since $\dim E^c = 2$ there is exactly one such pair. Each member of that pair has multiplicity one, which proves the first assertion. Multiplicity one implies that the generalized eigenspaces of $\lambda$ and $\bar\lambda$ are eigenlines, so the complexifcation of $A^c$ is diagonalizaable with eigenvalues $\lambda$ and $\bar\lambda$ of modulus one. Hence $A^c$ is conjugate over the reals to the rotation by the angle $\arg \lambda$. This angle is an irrational multiple of $2\pi$ because $\lambda$ is not a root of unity, and it preserves the Euclidean inner product obtained from the standard one by the conjugation. This proves the second assertion. A matrix commuting with $A$ commutes with every polynomial in $A$ and therefore preserves the generalized eigenspace $\ker (A - \mu I)^N$ of the complexification for every eigenvalue $\mu$. Hence it preserves the sums of these spaces over the eigenvalues of a fixed range of moduli, hence the real subspaces $E^s$, $E^c$ and $E^u$. The fourth assertion is clear since one is a root of unity, and $\det(A - I)$ equals $\pm p_A(1)$ by the definition of the characteristic polynomial. Finally, if the set $\{ (A^T)^k m \, \colon \, k \in \Z \}$ is finite,  $(A^T)^k m = m$ for some $k \geq 1$ and $m \neq 0$. Then one is an eigenvalue of $(A^T)^k$, so some eigenvalue of $A$ is a root of unity, which is a contradiction. An infinite subset of $\Z^N$ is unbounded.
\end{proof}

The next lemma is the substitute for irreducibility of $p_A$ announced in the introduction. We state and prove it for an arbitrary ergodic automorphism, with no restriction on the dimension of the center space, since the general statement requires no additional effort and could be relevant for higher dimensional center space. Equivalently, the lemma asserts that the center space of the transpose matrix contains no nonzero integer vector.

\begin{lemma}\label{lem:density}
Let $A \in \GL(N,\Z)$ induce an ergodic automorphism of $\T^N$ and suppose $E^c \neq \{0\}$. Then the set $\{ n^c \, \colon \, n \in \Z^N \}$ is dense in $E^c$.
\end{lemma}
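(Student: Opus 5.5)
The plan is to show that the closure $\overline{\{n^c : n \in \Z^N\}}$ is a closed subgroup of $E^c$ that is not contained in any proper linear subspace, and then to upgrade this to density using the $A^c$-invariance together with duality. Let $\pi^c \colon \R^N \to E^c$ denote the projection along $E^s \oplus E^u$. Then $\Gamma = \pi^c(\Z^N)$ is a subgroup of $E^c$, and it is $A^c$-invariant because $\pi^c$ commutes with $A$ by Lemma \ref{lem:linearalgebra}(3) and $A\Z^N = \Z^N$. Its closure $\overline{\Gamma}$ is a closed subgroup of the vector space $E^c$. By the structure theorem for closed subgroups, $\overline{\Gamma} = V \oplus D$, where $V$ is the identity component, a linear subspace, and $D$ is discrete. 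It suffices to show that $V = E^c$. Equivalently, by Pontryagin duality, it suffices to show that no nonzero linear functional $\ell$ on $E^c$ satisfies $\ell(\Gamma) \subset \Z$ up to passing to the quotient by $V$. I prefer the following direct dual formulation.

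\textbf{Key step (duality).} Suppose $\overline{\Gamma} \neq E^c$. Then there is a nonzero linear functional $\ell \colon E^c \to \R$ with $\ell(\overline{\Gamma}) \subset \Z$. This is the standard fact that a proper closed subgroup of $\R^d$ is annihilated by a nontrivial character. Now compose to get $\phi = \ell \circ \pi^c$, a linear functional on $\R^N$ taking integer values on $\Z^N$. Hence $\phi(x) = \langle m, x\rangle$ for some nonzero $m \in \Z^N$, namely $m_i = \phi(e_i)$. Because $\phi$ vanishes on $E^s \oplus E^u$, the vector $m$ lies in the annihilator of $E^s \oplus E^u$. This annihilator is precisely the center space of $A^T$, since the generalized eigenspaces of $A^T$ pair with those of $A$ for the same eigenvalue and annihilate those of all other eigenvalues. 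This recovers the equivalent formulation stated before the lemma: there would be a nonzero integer vector in $E^c(A^T)$.

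\textbf{Contradiction.} Next I will rule out a nonzero $m \in \Z^N \cap E^c(A^T)$. The orbit $(A^T)^k m$ stays in $E^c(A^T)$ and lies in $\Z^N$. On $E^c(A^T)$ all eigenvalues have modulus one, so $\|(A^T)^k m\|$ grows at most polynomially in $|k|$. That alone does not give boundedness when there are Jordan blocks, so I will argue differently. The subspace $W$ spanned over $\Q$ by the orbit is $A^T$-invariant and rational, with $W \subset E^c(A^T)$. The restriction of $A^T$ to the lattice $W \cap \Z^N$ is an integer matrix whose characteristic polynomial has integer coefficients, unit constant term, and all roots of modulus one. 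By Kronecker's theorem all these roots are roots of unity. Since they are also eigenvalues of $A$, this contradicts ergodicity by Halmos's criterion. Therefore $V = E^c$ and $\Gamma$ is dense.

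The main obstacle is the duality step, producing $\ell$ with $\ell(\overline{\Gamma}) \subset \Z$. I will justify it from the decomposition $\overline{\Gamma} = V \oplus D$, where $D$ is a discrete subgroup of a complement of $V$, hence a lattice in a subspace. If $V \neq E^c$, choose $\ell$ vanishing on $V$ and integral on a basis of $D$, extended suitably. One must take care that $D$ may not span the complement. In that case choose $\ell$ vanishing on $V + \mathrm{span}\, D$ if that sum is proper, and otherwise take the dual-basis functional. Kronecker's theorem is the only number-theoretic input.
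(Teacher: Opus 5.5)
Your proposal is correct and follows essentially the same route as the paper. Both arguments take a nonzero functional that is integral on the closure of $\{n^c\}$, pull it back through the center projection to a nonzero integer vector $m$ in the annihilator of $E^s \oplus E^u$, identify that annihilator with $E^c(A^T)$, and finally apply Kronecker's theorem to $A^T$ restricted to the lattice in the rational invariant span of the orbit of $m$. The one step you leave implicit is that your pairing argument only gives the inclusion $E^c(A^T) \subseteq (E^s\oplus E^u)^{\perp}$; it becomes an equality by the dimension count $\dim E^c(A^T) = \dim E^c$, which is how the paper closes that step.
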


\begin{proof}
Let $d = \dim E^c$ and let $\Gamma$ be the closure of $\{ n^c \, \colon \, n \in \Z^N \}$ in $E^c$. Since the projection $x \mapsto x^c$ is linear, $\Gamma$ is a closed subgroup of $E^c$. Suppose, for a contradiction, that $\Gamma \neq E^c$. By the structure theorem for closed subgroups of a finite dimensional real vector space there are a linear subspace $V \subseteq E^c$, equal to the connected component of the identity of $\Gamma$, and vectors $w_1, \dots, w_b \in E^c$, linearly independent modulo $V$, such that $\Gamma = V + \Z w_1 + \dots + \Z w_b$. If $\dim V + b < d$ we choose a nonzero linear functional $\xi$ on $E^c$ vanishing on $V$ and on every $w_j$, so that $\xi(\Gamma) = \{0\}$. If $\dim V + b = d$ then $b \geq 1$, since $\Gamma \neq E^c$, and we choose $\xi$ vanishing on $V$ and on $w_1, \dots, w_{b-1}$ with $\xi(w_b) = 1$, so that $\xi(\Gamma) = \Z$. In both cases $\xi \neq 0$ and $\xi(\Gamma) \subseteq c \Z$ for some $c \geq 0$.

Define the linear functional $\eta$ on $\R^N$ by $\eta(x) = \xi(x^c)$. Then $\eta$ vanishes on $E^s \oplus E^u$, the restriction of $\eta$ to $E^c$ equals $\xi$, so $\eta \neq 0$, and $\eta(\Z^N) \subseteq a \Z$. If $a = 0$ then $\eta$ vanishes on $\Z^N$, hence on all of $\R^N$, which $\Z^N$ spans, a contradiction. Therefore $a > 0$, and after replacing $\xi$ by $\xi / a$ we may assume $\eta(\Z^N) \subseteq \Z$. Setting $m = (\eta(e_1), \dots, \eta(e_N)) \in \Z^N$, where $e_1, \dots, e_N$ denotes the standard basis, we obtain $\eta(x) = \langle m, x \rangle$ for every $x \in \R^N$, with $m \neq 0$ and
\[
\langle m, v \rangle = 0 \qquad \text{for every } v \in E^s \oplus E^u .
\]

We claim that the annihilator of $E^s \oplus E^u$ with respect to the standard pairing equals the center space $E^c(A^T)$ of the transpose matrix. Let $\mu$ be an eigenvalue of $A$ and let $\nu$ be an eigenvalue of $A^T$ with $\nu \neq \mu$, and let $x$ and $y$ be vectors of the corresponding generalized eigenspaces of the complexifications, say $(A - \mu)^p x = 0$ and $(A^T - \nu)^q y = 0$. The operator $A - \nu$ restricted to $\ker (A - \mu)^p$ is invertible, because its unique eigenvalue there is $\mu - \nu \neq 0$, so $x = (A - \nu)^q z$ for some $z \in \ker(A - \mu)^p$, and
\[
\langle y, x \rangle = \langle y, (A - \nu)^q z \rangle = \langle (A^T - \nu)^q y, z \rangle = 0 .
\]
Since $A$ and $A^T$ have the same characteristic polynomial, this computation show that $E^c(A^T)$ pairs to zero with every generalized eigenspace of $A$ associated to an eigenvalue of modulus different from one, so $E^c(A^T)$ is contained in the annihilator of $E^s \oplus E^u$. Both spaces have dimension $d$, which proves the claim. Hence $m$ is a nonzero integer vector lying in $E^c(A^T)$.

Let $W$ be the linear span of the orbit $\{ (A^T)^j m \, \colon \, j \geq 0 \}$. Then $A^T W \subseteq W$, and equality holds because $A^T$ is invertible and $W$ is finite dimensional. The space $W$ is spanned by integer vectors, so $\Lambda = W \cap \Z^N$ is a lattice of full rank in $W$, and $A^T$ maps $\Lambda$ onto itself because $A^T$ and its inverse preserve $\Z^N$ and $W$. In a basis of $\Lambda$ the restriction of $A^T$ to $W$ is therefore given by a matrix $B \in \GL(d', \Z)$ with $d' = \dim W$. On the other hand $m \in E^c(A^T)$ and $E^c(A^T)$ is invariant under $A^T$, so $W \subseteq E^c(A^T)$. Every eigenvalue of $B$ is an eigenvalue of $A^T$ admitting an eigenvector in $E^c(A^T) \otimes_\R \C$, and since the generalized eigenspaces associated to distinct eigenvalues are independent, such an eigenvalue has modulus one. The characteristic polynomial of $B$ is monic with integer coefficients and all its roots have modulus one, so all its roots are roots of unity by the theorem of Kronecker \cite{Gr}. The eigenvalues of $B$ are eigenvalues of $A$, and this contradicts ergodicity, completing the proof.
\end{proof}

\begin{remark}
When $\dim E^c = 2$ there is a short geometric alternative. The closure $\Gamma$ is then a closed subgroup of the plane invariant under $A^c$, which is a rotation by an irrational angle by Lemma \ref{lem:linearalgebra}. If $\Gamma$ were trivial then $\Z^N$ would be contained in $E^s \oplus E^u$, if $\Gamma$ were discrete and nonzero it would contain an infinite bounded orbit of the rotation, and if the connected component of the identity of $\Gamma$ were a line then that line would be invariant under the rotation. These are the only possible proper cases and each case is impossible, so $\Gamma$ is the whole plane. This argument does not extend to center spaces of dimension larger than two, where $A^c$ may admit proper invariant subspaces, while the proof given above does.
\end{remark}

\begin{lemma}\label{lem:fix}
Let $A \in \GL(N,\Z)$ have no eigenvalue equal to one. Then $\Fix(A)$ is isomorphic to $\Z^N / (A - I)\Z^N$ and has order $|p_A(1)|$.
\end{lemma}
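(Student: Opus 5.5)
The plan is to identify $\Fix(A)$ with a quotient of lattices by lifting to $\R^N$, and then to compute the order of that quotient with the Smith normal form.

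\emph{Isomorphism.} A point $p(x)$ with $x \in \R^N$ lies in $\Fix(A)$ exactly when $Ax - x \in \Z^N$, that is, when $(A-I)x \in \Z^N$. By Lemma \ref{lem:linearalgebra}(4), or directly from the hypothesis that $1$ is not an eigenvalue, $A - I$ is invertible over $\R$. So we consider the map
\[
\phi \colon \Z^N \to \T^N, \qquad n \mapsto p\bigl((A-I)^{-1} n\bigr).
\]
\begin{itemize}
\item It is a homomorphism.
\item Its image lies in $\Fix(A)$, because $(A-I)$ applied to the lift $(A-I)^{-1}n$ gives $n \in \Z^N$.
\item It is surjective onto $\Fix(A)$: if $p(x)$ is fixed, set $n = (A-I)x \in \Z^N$; then $x = (A-I)^{-1}n$.
\item Its kernel is $\{ n : (A-I)^{-1} n \in \Z^N \}$, which is exactly $(A-I)\Z^N$. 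Indeed, if $(A-I)^{-1}n = k \in \Z^N$ then $n = (A-I)k$, and conversely.
\end{itemize}
The first isomorphism theorem then gives $\Fix(A) \cong \Z^N/(A-I)\Z^N$.

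\emph{Order.} The matrix $A - I$ has integer entries and nonzero determinant. Take its Smith normal form $U(A-I)V = \operatorname{diag}(d_1,\dots,d_N)$ with $U, V \in \GL(N,\Z)$ and all $d_i \neq 0$. This gives
\[
\Z^N/(A-I)\Z^N \cong \bigoplus_{i=1}^N \Z/d_i\Z,
\]
a group of order $\prod_i |d_i| = |\det(A-I)|$. Finally, $\det(A-I) = (-1)^N \det(I-A) = \pm p_A(1)$, which is nonzero. Hence $|\Fix(A)| = |p_A(1)|$.

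There is no genuine obstacle here. The only point needing care is the sign convention for $p_A$, and it disappears once absolute values are taken.
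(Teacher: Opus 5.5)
Your proof is correct and is essentially the paper's argument. The paper identifies $\Fix(A)$ with $(A-I)^{-1}\Z^N/\Z^N$ via $w_0 \mapsto (A-I)w_0$, which is the inverse of your homomorphism $\phi$, and then simply cites $|\Z^N/(A-I)\Z^N| = |\det(A-I)|$; your Smith normal form computation supplies the justification for that standard fact.
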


\begin{proof}
A point $w = p(w_0)$ lies in $\Fix(A)$ if and only if $(A - I) w_0 \in \Z^N$, that is if and only if $w_0 \in (A - I)^{-1} \Z^N$. The map $w_0 \mapsto (A - I) w_0$ induces an isomorphism between $(A-I)^{-1}\Z^N / \Z^N$ and $\Z^N / (A - I)\Z^N$, and the order of the latter group is $|\det(A - I)| = |p_A(1)|$.
\end{proof}

\begin{lemma}\label{lem:commutant}
Let $A \in \GL(N,\Z)$ have irreducible characteristic polynomial. Then the centralizer of $A$ in the algebra of rational matrices equals $\Q[A]$, a field isomorphic to $K = \Q(\lambda)$ under the map sending $A$ to $\lambda$. The set $\mathcal{O}_A = \Q[A] \cap M_N(\Z)$ is an order in $K$, and the centralizer of $A$ in $\GL(N,\Z)$ equals the unit group $\mathcal{O}_A^{\times}$. This group is a finitely generated abelian group whose free rank equals $r_1 + r_2 - 1$.
\end{lemma}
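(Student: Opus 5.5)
We prove the four assertions of Lemma \ref{lem:commutant} in turn, using only standard algebraic number theory.

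\textbf{The rational centralizer.} Since $p_A$ is irreducible over $\Q$, it is also the minimal polynomial of $A$. Evaluation at $A$ therefore induces an isomorphism $\Q[t]/(p_A) \cong \Q[A]$. Sending $A$ to $\lambda$ induces $\Q[t]/(p_A) \cong \Q(\lambda) = K$. Hence $\Q[A]$ is a field isomorphic to $K$ with $A \mapsto \lambda$.

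To see that every rational matrix commuting with $A$ lies in $\Q[A]$, view $\Q^N$ as a $\Q[A]$-module, that is, as a $K$-vector space. Its $\Q$-dimension is $N = [K:\Q]$, so its $K$-dimension is one. A rational matrix $C$ with $CA = AC$ is a $K$-linear endomorphism of this one dimensional $K$-space. It is therefore multiplication by some element of $K$, which means $C \in \Q[A]$.

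\textbf{The order $\mathcal{O}_A$.} The set $\mathcal{O}_A = \Q[A] \cap M_N(\Z)$ is a subring containing $\Z[A]$. The ring $\Z[A]$ is free of rank $N$ over $\Z$, because $1, A, \dots, A^{N-1}$ are $\Q$-independent, so $\mathcal{O}_A$ has rank at least $N$. Conversely, $\mathcal{O}_A$ is contained in the free $\Z$-module $M_N(\Z)$, so it is finitely generated and torsion free. Its elements have integer matrices, so their characteristic polynomials are monic integral, and they are integral over $\Z$. Thus $\mathcal{O}_A$ is a subring of $K$ that is free of rank $N$ and contained in the ring of integers, which is exactly an order.

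\textbf{Centralizer in $\GL(N,\Z)$.} If $B \in \GL(N,\Z)$ commutes with $A$, then $B \in \mathcal{O}_A$ by the first step. Its inverse $B^{-1}$ is also an integer matrix commuting with $A$, so $B^{-1} \in \mathcal{O}_A$ and $B$ is a unit of $\mathcal{O}_A$. Conversely, a unit $u \in \mathcal{O}_A^\times$ is an integer matrix whose inverse is again an integer matrix, so $u \in \GL(N,\Z)$, and $u$ commutes with $A$ because $\Q[A]$ is commutative.

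\textbf{Structure of the unit group.} By the Dirichlet unit theorem for orders, $\mathcal{O}_A^\times$ is finitely generated. Its torsion is the finite group of roots of unity in $K$, and its free rank is $r_1+r_2-1$. If one only knows the theorem for the maximal order $\mathcal{O}_K$, the same rank follows because $\mathcal{O}_A^\times$ has finite index in $\mathcal{O}_K^\times$. To see the finite index, let $f$ be the nonzero conductor ideal, so that $f\mathcal{O}_K \subseteq \mathcal{O}_A$. The kernel of the map $\mathcal{O}_K^\times \to (\mathcal{O}_K/f)^\times$ consists of units $u \equiv 1 \bmod f$. Such units satisfy $u - 1 \in f \subseteq \mathcal{O}_A$, so they lie in $\mathcal{O}_A^\times$. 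Hence the index of $\mathcal{O}_A^\times$ in $\mathcal{O}_K^\times$ is at most $|(\mathcal{O}_K/f)^\times|$, which is finite.

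The main obstacle is this last comparison between $\mathcal{O}_A^\times$ and $\mathcal{O}_K^\times$, namely checking that the conductor argument really gives finite index. The other steps are direct linear algebra.
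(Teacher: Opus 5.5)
Your proof is correct and follows the paper's route. Your observation that $\Q^N$ is a one dimensional $K$-vector space on which a commuting rational matrix acts $K$-linearly is the same fact as the paper's cyclic-vector argument, and your conductor comparison simply makes explicit the passage from $\mathcal{O}_K^\times$ to $\mathcal{O}_A^\times$ that the paper leaves to the Dirichlet unit theorem for orders. One harmless slip: the torsion of $\mathcal{O}_A^\times$ is the group of roots of unity lying in $\mathcal{O}_A$, which can be a proper subgroup of the roots of unity of $K$, but this does not affect the finite generation or rank statement.
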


\begin{proof}
Since $p_A$ is irreducible it coincides with the minimal polynomial of $A$, so $A$ is conjugate over $\Q$ to the companion matrix of $p_A$ and admits a cyclic vector. The centralizer of a matrix with a cyclic vector consists of the polynomials in that matrix, so the centralizer of $A$ in $M_N(\Q)$ is $\Q[A] \cong \Q[t]/(p_A) \cong K$, which is a field by irreducibility. The set $\mathcal{O}_A$ is a subring of $\Q[A]$ containing the identity. It is finitely generated as an abelian group because it is contained in $M_N(\Z)$, and it contains $\Z[A]$, hence it has rank $N$ over $\Z$ and is an order in $K$. If $B \in \GL(N,\Z)$ commutes with $A$ then $B \in \mathcal{O}_A$, and $B^{-1}$ commutes with $A$ as well and is integral, so $B^{-1} \in \mathcal{O}_A$ and $B \in \mathcal{O}_A^{\times}$. The converse inclusion is clear. The final assertion is the Dirichlet unit theorem for orders in number fields, see \cite{BS}.
\end{proof}

\begin{lemma}\label{lem:quartic}
Let $A$ be as in Lemma \ref{lem:linearalgebra}, let $\lambda$ be the eigenvalue of modulus one with positive imaginary part, and let $q \in \Z[t]$ be its minimal polynomial. Then $q$ divides $p_A$, the set of roots of $q$ is stable under $\mu \mapsto \mu^{-1}$, the degree of $q$ is even and at least four, and $q$ has roots of modulus different from one. In particular, if $N = 4$ then $q = p_A$ and $p_A$ is irreducible.
\end{lemma}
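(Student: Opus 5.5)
Since $\lambda$ is an eigenvalue of $A$ and $A$ has integer entries, $\lambda$ is an algebraic integer. Its minimal polynomial $q$ divides $p_A$ in $\Q[t]$, and by Gauss's lemma also in $\Z[t]$, since both are monic. This gives the first assertion.

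For the inversion symmetry, I would use that $|\lambda|=1$, so $\bar\lambda = \lambda^{-1}$. Because $q$ has real coefficients, $\bar\lambda$ is a root of $q$, and hence so is $\lambda^{-1}$. Now let $q^*(t) = t^{\deg q} q(1/t)$ be the reciprocal polynomial. It has rational coefficients, vanishes at $\lambda$, and has the same degree as $q$. Note that $q(0)\ne 0$, since $q$ divides $p_A$ and $\det A = \pm 1$. By minimality, $q^*$ is a scalar multiple of $q$. Consequently the root set of $q$ is stable under $\mu\mapsto\mu^{-1}$.

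For the degree, I would pair each root $\mu$ of $q$ with $\mu^{-1}$. The roots are distinct because $q$ is irreducible over $\Q$, hence separable. A root is fixed by inversion exactly when $\mu=\pm1$, and these cannot be roots of $q$: $q$ is irreducible of degree at least $2$, since $\lambda$ is nonreal. So the roots split into pairs $\{\mu, \mu^{-1}\}$ and $\deg q$ is even.

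The next step is to show that $q$ has a root of modulus different from one. Suppose instead that all roots of $q$ had modulus one. Then, since $q$ is a monic integer polynomial, Kronecker's theorem (cited in the proof of Lemma \ref{lem:density}) would make $\lambda$ a root of unity. This contradicts Lemma \ref{lem:linearalgebra}(1).

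Degree at least four follows from the previous step. If $\deg q = 2$, then $q = (t-\lambda)(t-\bar\lambda)$, and both roots have modulus one, which we have just excluded. Since the degree is even, it must be at least four.

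Finally, suppose $N=4$. Then $4\le \deg q\le \deg p_A = 4$, and both polynomials are monic, so $q=p_A$ and $p_A$ is irreducible.

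The only nonroutine ingredient is Kronecker's theorem, which is already available from the proof of Lemma \ref{lem:density}. The one point needing care is verifying that $q(0)\neq 0$ and that $\pm1$ are not roots of $q$, since both are required for the inversion pairing; both follow directly from the facts noted above.
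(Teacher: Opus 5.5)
Your proof is correct and follows the paper's argument essentially step by step: divisibility of $p_A$ by $q$, inversion symmetry of the roots, pairing of roots away from $\pm 1$ for evenness, Kronecker's theorem to rule out all roots on the unit circle and hence degree two, and the degree count for $N=4$. The only difference is cosmetic: you get the inversion symmetry from the reciprocal polynomial $t^{\deg q} q(1/t)$ and the minimality of $q$, whereas the paper writes each root as $\sigma(\lambda)$ for an embedding $\sigma$ of $\Q(\lambda)$ and applies $\sigma$ to the root $\lambda^{-1}$; both are equally elementary.
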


\begin{proof}
The number $\lambda$ is an algebraic integer, so its minimal polynomial $q$ is monic with integer coefficients and divides $p_A$. Since $|\lambda| = 1$ we have $\lambda^{-1} = \bar\lambda$, which is a root of $q$ because $q$ has real coefficients. Every root of $q$ has the form $\sigma(\lambda)$ for an embedding $\sigma$ of $\Q(\lambda)$ into $\C$, and $\sigma(\lambda)^{-1} = \sigma(\lambda^{-1})$ is then the image under $\sigma$ of the root $\lambda^{-1} \in \Q(\lambda)$ of $q$, hence again a root of $q$. This proves stability under inversion. A root fixed by inversion would equal $\pm 1$, which is impossible for an irreducible polynomial with the nonreal root $\lambda$, so the roots of $q$ split into pairs $\{\mu, \mu^{-1}\}$ with $\mu \neq \mu^{-1}$ and the degree of $q$ is even. If all roots of $q$ had modulus one they would be roots of unity by the theorem of Kronecker \cite{Gr}, contradicting the choice of $\lambda$, so $q$ has roots of modulus different from one, and in particular the degree of $q$ is not two, since a monic reciprocal integer quadratic with a nonreal root has both roots on the unit circle, hence roots of unity. Therefore the degree of $q$ is at least four. For $N = 4$ this forces $q = p_A$. Compare \cite[Lemmas 13 and 14]{AU}.
\end{proof}

Now we recall a definition of accessibility property. For a partially hyperbolic diffeomorphism $f$ the accessibility class $C(x)$ of a point $x$ is the set of points that can be joined to $x$ by a path consisting of finitely many arcs each of which is contained in a single strong stable or strong unstable leaf of $f$, and $f$ has the accessibility property if $C(x) = \T^N$ for some, equivalently every, point $x$. \\

The following statement combines Theorem 1 of \cite{AU} with the structure of the non accessible case established in Section 6 of that paper, which relies on \cite{RV} for the submanifold structure of accessibility classes and on Sections 3 to 6 of \cite{RH}.

\begin{theorem}\label{thm:au}
Let $A \in \SL(N,\Z)$ induce an ergodic automorphism of $\T^N$ with $\dim E^c = 2$. There is a neighborhood $\mathcal{U}$ of $A$ in the space of volume preserving $C^{22}$ diffeomorphisms of $\T^N$ such that every $f \in \mathcal{U}$ is ergodic with respect to Lebesgue measure and such that for every $f \in \mathcal{U}$ either $f$ has the accessibility property, or the strong stable and strong unstable bundles of $f$ are jointly integrable and the accessibility classes of $f$ are the leaves of an $f$ invariant topological foliation whose leaves are closed sets of codimension two.
\end{theorem}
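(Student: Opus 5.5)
The statement is a repackaging of results from \cite{AU}, \cite{RH} and \cite{RV} rather than something to be proved from scratch, so my plan is to assemble it from those sources and check that every step used is valid without irreducibility of $p_A$. First I fix $\mathcal{U}$ as the $C^{22}$ neighborhood in Theorem 1 of \cite{AU}, shrinking it if necessary, so that every $f \in \mathcal{U}$ satisfies two conditions. It should be partially hyperbolic with splitting $E^s_f \oplus E^c_f \oplus E^u_f$ close to $E^s \oplus E^c \oplus E^u$. It should also be center bunched, which holds because $A^c$ is an isometry by Lemma \ref{lem:linearalgebra}. Ergodicity of every $f \in \mathcal{U}$ is then exactly the conclusion of Theorem 1 of \cite{AU}, and I would simply cite it.

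For the dichotomy I would follow the structure of Sections 3 to 6 of \cite{RH} as reorganized in \cite{AU}. By \cite{RV}, every accessibility class is an immersed $C^1$ submanifold. I then consider the set $\Gamma(f)$ of points whose accessibility class is not open. It is compact, $f$ invariant, and saturated by strong stable and strong unstable leaves. If $\Gamma(f)$ is empty, every class is open. Connectedness of $\T^N$ then gives a single class, so $f$ is accessible. If $\Gamma(f)$ is nonempty, the argument of \cite{RH} uses that the center is two dimensional and nearly isometric to exclude classes of dimension $N-1$. This leaves non-open classes of dimension $N-2$, tangent to $E^s_f \oplus E^u_f$. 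The one place where \cite{RH} uses the pseudo-Anosov hypothesis is to show that such an invariant saturated lamination is all of $\T^N$. Here I would invoke the minimality criterion of \cite{AU} in its place. This gives $\Gamma(f) = \T^N$, so the accessibility classes form a foliation with leaves tangent to $E^s_f \oplus E^u_f$. This tangency is precisely joint integrability of the strong bundles.

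The remaining claim is that the leaves are closed sets of codimension two. Codimension two follows from the dimension count above. For closedness, I would follow Section 6 of \cite{AU}. There the holonomy of the $su$ foliation along a center transversal is linearized via the theorems of Herman and Moser, which makes the foliation $C^0$ conjugate to the linear foliation by translates of $E^s \oplus E^u$ projected to $\T^N$. So it suffices that the leaves of that linear foliation are closed. This reduces to showing that $\Z^N$ does not accumulate on $E^s \oplus E^u$ inside the transversal $E^c$, or at least that accessibility classes are exactly such leaves.

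I expect this last step to be the main obstacle. Lemma \ref{lem:density} points toward density of $\Z^N$ projected to $E^c$, which would suggest dense rather than closed leaves in $\T^N$. So I need to check carefully in which sense ``closed'' is meant: closed as subsets of the universal cover, or properly embedded, consistent with \cite{AU}. I would state the theorem with the precise formulation from Section 6 of \cite{AU} rather than try to reprove it.
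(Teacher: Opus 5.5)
Your plan is the paper's own: the paper gives no independent proof of Theorem~\ref{thm:au}. It simply combines Theorem 1 of \cite{AU} with the description of the non accessible case in Section 6 of \cite{AU}, which rests on \cite{RV} and on Sections 3 to 6 of \cite{RH}, and the minimality criterion of \cite{AU} replaces the pseudo-Anosov step exactly as you describe.

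The point you leave open should be settled rather than deferred, and your suspicion is correct: ``closed'' cannot mean closed in $\T^N$.

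\begin{itemize}
\item Already $f = A$ lies in $\mathcal{U}$ and is not accessible. Its accessibility classes are the cosets $x + p(E^s \oplus E^u)$.
\item These cosets are proper, by the last part of the proof of Proposition~\ref{prop:exclusive}.
\item They are also dense, since Lemma~\ref{lem:density} makes $E^s \oplus E^u + \Z^N$ dense in $\R^N$. Hence they are not closed.
\item The same holds for every non accessible $f \in \mathcal{U}$, because $H$ carries its classes onto such dense cosets by Corollary~\ref{cor:classes}.
\end{itemize}

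So the reduction you sketch is aiming at a false statement and should be discarded. That reduction was to show, after a Herman--Moser linearization, that the projection of $\Z^N$ to $E^c$ does not accumulate.

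The statement is only correct for the lifted classes $\widetilde{C}(x)$ in $\R^N$, which is how it is used in Section 4. There closedness needs no linearization at all. The lifted classes are the fibers of the continuous surjection $\pi^{su} \colon \R^N \to W^c(0)$ of Fact 1, and are therefore closed.
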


In the second case of the alternative further structure is available from \cite[Section 6]{AU} and \cite[Sections 3 to 6]{RH}. We list it at the beginning of Section 4, where it is used.

\begin{remark}
When $A$ is pseudo-Anosov in the sense of \cite{RH} and $N \geq 6$, all the statements of this paper hold with $\mathcal{U}$ replaced by the $C^5$ neighborhood of \cite[Theorem 1.1]{RH}, since the alternative and the structure of the jointly integrable case are established there in that regularity.
\end{remark}

\begin{remark}\label{rem:dimensions}
The class covered by Theorem \ref{thm:au} is large. By \cite{AU} every ergodic non Anosov automorphism of $\T^N$ has two dimensional center when $N \leq 5$ or $N = 7$, and the same holds in dimensions $6$ and $9$ provided no eigenvalue of $A$ is a Salem number. Consequently, in dimensions $N \leq 5$ and $N = 7$ the dichotomy of Theorem \ref{thm:main} applies to every ergodic automorphism, since in the Anosov case the conjugacy is provided by structural stability for every $C^1$ small perturbation and the centralizer description then follows from the theorem of Adler and Palais alone. A concrete family of examples in dimension four is given by the companion matrices of Salem polynomials of degree four, for instance $t^4 - t^3 - t^2 - t + 1$, whose roots consist of a pair of real numbers $\mu$ and $\mu^{-1}$ with $\mu > 1$ together with a pair of nonreal numbers of modulus one which are not roots of unity.
\end{remark}

\section{Homeomorphisms commuting with an ergodic automorphism}

Thrughout this section $A \in \GL(N,\Z)$ is ergodic and the restriction $A^c$ of $A$ to its center space is semisimple. By Lemma \ref{lem:linearalgebra} this holds for every ergodic automorphism with two dimensional center, and it holds vacuously when $A$ is hyperbolic. For automorphisms the affine rigidity below is due to Adler and Palais \cite{AP}, whose hypothesis is ergodicity, and Walters \cite{W1} proved that ergodicity is also necessary, later extending the theory to affine transformations of compact abelian groups \cite{W2}. We include complete proofs because the following lemma is reused in Section 5 and because the precise semisimple center formulation is the one needed later.

\begin{lemma}\label{lem:transfer}
Let $\varphi \colon \T^N \to \R^N$ be a continuous map satisfying
\[
\varphi(A x) = A \, \varphi(x) \qquad \text{for every } x \in \T^N .
\]
Then $\varphi$ vanishes identically.
\end{lemma}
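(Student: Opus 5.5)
Decompose $\varphi = \varphi^s + \varphi^c + \varphi^u$ according to the splitting $\R^N = E^s \oplus E^c \oplus E^u$. Since $A$ preserves each summand, each component satisfies the same relation $\varphi^\sigma(Ax) = A^\sigma \varphi^\sigma(x)$, and we treat the three pieces separately.

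For the hyperbolic components we use compactness. The torus is compact, so $M = \sup_x |\varphi(x)|$ is finite. For the stable component, iterate backwards: $\varphi^s(x) = (A^s)^{k}\varphi^s(A^{-k}x)$ for $k \ge 0$. The norm of $(A^s)^k$ tends to zero, while $|\varphi^s(A^{-k}x)|$ stays bounded by a constant times $M$. Hence $\varphi^s(x)=0$. The unstable component is handled symmetrically, writing $\varphi^u(x) = (A^u)^{-k}\varphi^u(A^k x)$ and letting $k \to \infty$.

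For the center component the contraction argument fails, since $A^c$ is only an isometry. This is where semisimplicity of $A^c$ is used, and it is the main step. Because $A^c$ is semisimple with eigenvalues of modulus one, it preserves some Euclidean inner product on $E^c$; equivalently, its iterates are uniformly bounded. We then pass to Fourier series. For any linear functional $\ell$ on $E^c$ (complexified if needed), the function $\ell(\varphi^c)$ is continuous on $\T^N$, hence in $L^2(m)$. Moreover $A$ preserves Lebesgue measure $m$, and by ergodicity the Koopman operator $U h = h\circ A$ has no eigenfunctions with eigenvalue of modulus one other than the constants, and only for eigenvalue $1$. Over $\C$ we can take $\ell$ to be an eigenvector of the transpose of $A^c$ with eigenvalue $\mu$, $|\mu|=1$. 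Then $h = \ell(\varphi^c)$ satisfies $h\circ A = \mu h$, so $h$ is an $L^2$ eigenfunction.

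This can be checked directly with Fourier coefficients: $\hat h(m) = \mu^{-1}\hat h((A^T)^{-1} m)$ up to indexing, so $|\hat h|$ is constant along $A^T$-orbits. By Lemma \ref{lem:linearalgebra}(5), which we use in its general-ergodic form (Halmos), these orbits are infinite for $m\neq 0$. Square summability therefore forces $\hat h(m)=0$ for $m\ne 0$, so $h$ is a constant $c$ satisfying $c = \mu c$. Since $\mu$ is a center eigenvalue of the ergodic $A$, it is not $1$ (it is not a root of unity). Hence $c=0$. Because the eigenvectors $\ell$ span the dual of $E^c\otimes\C$ by semisimplicity, this gives $\varphi^c = 0$.

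We expect the only delicate point to be this center step: the need to diagonalize, which is exactly why semisimplicity is assumed, together with noting that $1$ is not an eigenvalue. The hyperbolic steps are routine.
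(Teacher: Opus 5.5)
Your proof is correct and follows the paper's argument. It uses the same contraction argument for the stable and unstable components, and the same Fourier argument on the center: coefficients have constant modulus along the infinite $A^T$-orbits, and the zero mode vanishes because $1$ is not an eigenvalue. The differences are cosmetic: you scalarize via eigenfunctionals of $(A^c)^T$ and use square summability, whereas the paper keeps vector-valued coefficients $a_m \in E^c \otimes_\R \C$, uses an $A^c$-invariant Hermitian norm, and invokes the Riemann--Lebesgue lemma.
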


\begin{proof}
Decompose $\varphi = \varphi^s + \varphi^c + \varphi^u$ according to the splitting $\R^N = E^s \oplus E^c \oplus E^u$. The splitting is $A$ invariant, so
\[
\varphi^\sigma(A x) = A^\sigma \varphi^\sigma(x), \qquad \sigma \in \{s, c, u\} .
\]
Iterating the relation for $\sigma = s$ gives $\varphi^s(x) = (A^s)^n \varphi^s(A^{-n} x)$ for every $n \geq 1$. Since the spectral radius of $A^s$ is smaller than one there are $C > 0$ and $\theta \in (0,1)$ with $\| (A^s)^n \| \leq C \theta^n$, and since $\varphi^s$ is continuous on a compact space it is bounded. Letting $n \to \infty$ yields $\varphi^s = 0$. In the same way $\varphi^u(x) = (A^u)^{-n} \varphi^u(A^n x)$ and $\| (A^u)^{-n} \| \leq C \theta^n$, so $\varphi^u = 0$.

It remains to treat the center component. Since $A^c$ is semisimple with eigenvalues of modulus one, choosing a basis of $E^c \otimes_\R \C$ consisting of eigenvectors of the complexification of $A^c$ and declaring it orthonormal produces a Hermitian norm $\| \cdot \|$ with respect to which the complexification of $A^c$ is an isometry. Write the Fourier expansion
\[
\varphi^c(x) = \sum_{m \in \Z^N} a_m \, e^{2\pi i \langle m, x \rangle}, \qquad a_m \in E^c \otimes_\R \C ,
\]
where the coefficients are computed with respect to Lebesgue measure. Then
\[
\varphi^c(A x) = \sum_{m \in \Z^N} a_m \, e^{2\pi i \langle A^T m, x \rangle} = \sum_{k \in \Z^N} a_{(A^T)^{-1} k} \, e^{2\pi i \langle k, x \rangle},
\]
so the equivariance relation gives $a_{(A^T)^{-1} k} = A^c \, a_k$ for every $k \in \Z^N$, equivalently
\[
a_k = A^c \, a_{A^T k}, \qquad k \in \Z^N .
\]
Hence $\| a_k \| = \| a_{A^T k} \|$, and the function $k \mapsto \| a_k \|$ is constant along the orbits of $A^T$ on $\Z^N$. For $k \neq 0$ these orbits are infinite and unbounded, since no eigenvalue of $A$ is a root of unity, while the coefficients of the continuous, hence integrable, map $\varphi^c$ tend to zero as $|k| \to \infty$ by the Riemann - Lebesgee lemma. Therefore $a_k = 0$ for every $k \neq 0$. For $k = 0$ the relation reads $a_0 = A^c a_0$, and since one is not an eigenvalue of $A$ we get $a_0 = 0$. Hence $\varphi^c = 0$, which completes the proof.
\end{proof}

\begin{proposition}\label{prop:affine}
Let $g$ be a homeomorphism of $\T^N$ with $A g = g A$. Then $g$ is affine. More precisely $g(x) = B x + w$ where $B \in \GL(N,\Z)$ is the automorphism induced by $g$ on the first homology group, $A B = B A$, and $w \in \Fix(A)$.
\end{proposition}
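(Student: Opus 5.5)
We plan to reduce the statement to Lemma \ref{lem:transfer} by writing $g$ as a linear map plus a periodic displacement. Fix a lift $\tilde g \colon \R^N \to \R^N$ of $g$. Since $g$ is a homeomorphism of the torus, there is a unique $B \in \GL(N,\Z)$ with $\tilde g(x + n) = \tilde g(x) + B n$ for all $x \in \R^N$ and $n \in \Z^N$; this $B$ is the map induced on $H_1(\T^N) = \Z^N$. It is invertible over $\Z$ because the lift of $g^{-1}$ provides an inverse. The displacement $\psi(x) = \tilde g(x) - B x$ is then $\Z^N$ periodic, so it descends to a continuous map $\psi \colon \T^N \to \R^N$.

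First we show $AB = BA$. Both $\tilde A \circ \tilde g$ and $\tilde g \circ \tilde A$ are lifts of $A g = g A$, so they differ by a constant integer vector $k$. Comparing how each side transforms under $x \mapsto x + n$ gives $A B n = B A n$ for every $n \in \Z^N$, hence $AB = BA$. Next we write the commutation relation in terms of $\psi$:
\[
A B x + A\psi(x) = B A x + \psi(A x) + k, \qquad \text{so} \qquad \psi(Ax) = A \psi(x) - k .
\]
Since $A - I$ is invertible by Lemma \ref{lem:linearalgebra}, and in general because $1$ is not an eigenvalue of an ergodic $A$, we set $c = (A - I)^{-1} k$. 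Then $\varphi = \psi - c$ satisfies $\varphi(Ax) = A\varphi(x)$, and $\varphi$ is a continuous map $\T^N \to \R^N$. Lemma \ref{lem:transfer} gives $\varphi \equiv 0$, so $\tilde g(x) = B x + c$ and $g(x) = B x + w$ with $w = p(c)$.

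Finally, $(A - I) c = k \in \Z^N$, so $A w = w$ in $\T^N$, that is $w \in \Fix(A)$. The only delicate points are bookkeeping ones. One must check that the integer vector $k$ is independent of $x$, which follows from continuity and the discreteness of $\Z^N$. One must also note that changing the chosen lift shifts $c$ by an integer vector and therefore does not change $w$. All of the analytic difficulty is contained in Lemma \ref{lem:transfer}, which we take as given. Hence we expect no genuine obstacle here beyond correctly setting up the equivariance relation for $\varphi$.
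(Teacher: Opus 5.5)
Your proof is correct and follows the paper's argument. You lift $g$, extract the homology matrix $B$ and the integer constant from the commutation relation, subtract a constant solving $(A-I)c = k$, and apply Lemma \ref{lem:transfer} to the resulting periodic equivariant map. The only cosmetic differences are that the paper first composes the lift with $B^{-1}$, so its periodic map is exactly $B^{-1}$ times yours, and that it obtains $AB = BA$ from functoriality of homology rather than by comparing the two lifts under integer translations.
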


\begin{proof}
Let $B \in \GL(N,\Z)$ be the matrix of the action of $g$ on $H_1(\T^N, \Z) \cong \Z^N$ and let $G \colon \R^N \to \R^N$ be a lift of $g$, so that $p \circ G = g \circ p$ and
\[
G(x + n) = G(x) + B n, \qquad n \in \Z^N .
\]
Functoriality of homology applied to the relation $g A = A g$ gives $B A = A B$. The maps $G \circ A$ and $A \circ G$ are both lifts of the map $g A = A g$ of the torus, so their difference is a continuous map with values in $\Z^N$, hence a constant, that is
\[
G(A x) = A \, G(x) + m_0 \qquad \text{for some } m_0 \in \Z^N \text{ and every } x \in \R^N .
\]
Set
\[
k = B^{-1} \circ G + v, \qquad v = (A - I)^{-1} B^{-1} m_0 ,
\]
which is well defined by ergodicity. Then $k(x + n) = k(x) + n$ for every $n \in \Z^N$, and using $A B^{-1} = B^{-1} A$ we compute

\begin{align*}
k \big(A x \big) &= B^{-1} \big( G(A \, x) \big) + v = B^{-1} \big( A \, G(x) + m_0 \big) + v \\ &= A \big( k(x) - v \big) + B^{-1} m_0 + v = A \, k(x) + (I - A) v + B^{-1} m_0 \\ &= A \, k(x),
\end{align*}

by the choice of $v$. Therefore $\varphi = k - \mathrm{id}$ is continuous, $\Z^N$ periodic, and satisfies $\varphi(A x) = A \varphi(x)$, so it descends to a map on the torus to which Lemma \ref{lem:transfer} applies. Hence $\varphi = 0$, that is $G(x) = B(x - v) = B x + w_0$ with $w_0 = - B v$.

It remains to check that $w = p(w_0)$ lies in $\Fix(A)$. Using $B(A - I) = (A - I) B$ we get
\[
A w_0 - w_0 = -(A - I) B v = - B (A - I) v = - B B^{-1} m_0 = - m_0 \in \Z^N ,
\]
so $A w = w$ on the torus. Finally $g(x) = B x + w$ since $G$ is a lift of $g$.
\end{proof}

\begin{remark}
Invertibility of $g$ was used only to conclude that $B \in \GL(N,\Z)$. The same proof shows that every continuous map of $\T^N$ commuting with $A$ whose induced matrix on the first homology group is nonsingular is affine.
\end{remark}

\begin{corollary}\label{cor:centralizerA}
Let $A$ be as above. Then the centralizer of $A$ in $\Homeo(\T^N)$ equals $\Aff_A$. The map sending $x \mapsto B x + w$ to $B$ is a surjective homomorphism from $\Aff_A$ onto the centralizer of $A$ in $\GL(N,\Z)$ with kernel the group of translations by elements of $\Fix(A)$, and this extension splits, so
\[
\Aff_A \cong \Fix(A) \rtimes C_{\GL(N,\Z)}(A) .
\]
If moreover $p_A$ is irreducible then $C_{\GL(N,\Z)}(A) = \mathcal{O}_A^{\times}$ and $\Aff_A$ is virtually free abelian of rank $r_1 + r_2 - 1$.
\end{corollary}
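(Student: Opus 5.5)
The plan is to read the first assertion off Proposition \ref{prop:affine} and then assemble the group structure from elementary verifications. Proposition \ref{prop:affine} shows that every homeomorphism commuting with $A$ has the form $x \mapsto Bx + w$ with $B \in \GL(N,\Z)$, $AB = BA$ and $w \in \Fix(A)$, so it lies in $\Aff_A$. For the converse inclusion, take $g(x) = Bx + w$ in $\Aff_A$. Then $g$ is a homeomorphism because its inverse is $x \mapsto B^{-1}x - B^{-1}w$. A direct computation gives
\[
A g(x) = ABx + Aw = BAx + w = g(Ax),
\]
using $AB = BA$ and $Aw = w$. Hence the centralizer of $A$ in $\Homeo(\T^N)$ is exactly $\Aff_A$. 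In particular $\Aff_A$ is a group. This can also be checked directly: $B\Fix(A) = \Fix(A)$ because $B$ commutes with $A$, and $B^{-1}w \in \Fix(A)$ for the same reason.

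For the homomorphism, composing $x \mapsto B_1x + w_1$ and $x \mapsto B_2x + w_2$ gives $x \mapsto B_1B_2x + B_1w_2 + w_1$. Its linear part is $B_1B_2$, so sending an element to its linear part is a homomorphism. It is well defined because an affine map determines its linear part: the linear part is the action on $H_1$, or equivalently one recovers $B$ as $g(x) - g(0)$. The map is surjective onto $C_{\GL(N,\Z)}(A)$, since any such $B$ gives the element $x \mapsto Bx$. Its kernel is the set of translations by elements of $\Fix(A)$, which Lemma \ref{lem:fix} identifies as a finite group of order $|p_A(1)|$. The inclusion $B \mapsto (x \mapsto Bx)$ is a homomorphic section, so the extension splits. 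The conjugation action of $B$ on translations sends the translation by $w$ to the translation by $Bw$, which gives the semidirect product $\Fix(A) \rtimes C_{\GL(N,\Z)}(A)$.

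In the irreducible case, Lemma \ref{lem:commutant} gives $C_{\GL(N,\Z)}(A) = \mathcal{O}_A^\times$. This group is finitely generated abelian of free rank $r_1 + r_2 - 1$, so it contains a finite index subgroup $F \cong \Z^{r_1+r_2-1}$. I then need a finite index free abelian subgroup of $\Aff_A$ itself, and this is the step I expect to require the most care. The preimage of $F$ is $\Fix(A) \rtimes F$, which has finite index but need not be abelian. To fix this, note that $F$ acts on the finite group $\Fix(A)$ through a finite quotient, so the kernel $F'$ of that action has finite index in $F$ and is again free abelian of the same rank. The subgroup $\{x \mapsto Bx : B \in F'\}$ is then isomorphic to $\Z^{r_1+r_2-1}$ and has index at most $|\Fix(A)| \cdot [\mathcal{O}_A^\times : F']$ in $\Aff_A$. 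Hence $\Aff_A$ is virtually free abelian of rank $r_1 + r_2 - 1$, which completes the proof.
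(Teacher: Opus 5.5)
Your proof is correct and follows the paper's argument: Proposition \ref{prop:affine} together with the direct check that elements of $\Aff_A$ commute with $A$, the linear-part homomorphism split by $B \mapsto (x \mapsto Bx)$, and Lemma \ref{lem:commutant} for the irreducible case. The only difference is that your passage to the kernel $F'$ is unnecessary. The linear subgroup $\{x \mapsto Bx\}$ is already isomorphic to the abelian group $\mathcal{O}_A^{\times}$ and has index $|\Fix(A)|$ in $\Aff_A$, so the image of any finite index $F \cong \Z^{r_1+r_2-1}$ already works; this is how the paper concludes.
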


\begin{proof}
An affine map $x \mapsto B x + w$ with $A B = B A$ and $A w = w$ commutes with $A$, since $A(Bx + w) = BAx + Aw = B(Ax) + w$. Conversely every homeomorphism commuting with $A$ has this form by Proposition \ref{prop:affine}. The homomorphism onto the linear parts is surjective because $w = 0$ is allowed, its kernel consists of the translations by $\Fix(A)$, and the assignment $B \mapsto (x \mapsto Bx)$ is a homomorphic section. The last assertion follows from Lemma \ref{lem:commutant}, since the subgroup of linear elements of $\Aff_A$ is then abelian of free rank $r_1 + r_2 - 1$ and has finite index $|\Fix(A)|$.
\end{proof}

\section{The conjugacy in the jointly integrable case}

In this section we prove the following proposition, which extends the conjugacy statement of \cite[Section 6]{RH} from the pseudo-Anosov class to the full two dimensional center class of \cite{AU}.

\begin{proposition}\label{prop:conjugacy}
Let $f \in \mathcal{U}$ and suppose that $f$ does not have the accessibility property. Then there is a homeomorphism $H$ of $\T^N$ homotopic to the identity such that $H \circ f = A \circ H$.
\end{proposition}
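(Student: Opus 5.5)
We plan to follow the scheme of \cite[Section 6]{RH}. The only place where irreducibility of $p_A$ enters there will be replaced by Lemma \ref{lem:density}. We start from Theorem \ref{thm:au} and the additional structure of the jointly integrable case from \cite[Section 6]{AU}. The accessibility classes form an $f$ invariant foliation $\mathcal{W}^{su}$ by closed codimension two leaves. Its lift $\widetilde{\mathcal{W}}^{su}$ to $\R^N$ consists of leaves that are graphs of continuous maps $E^s \oplus E^u \to E^c$ and lie at bounded distance from the linear leaves $x + E^s \oplus E^u$. Consequently, each lifted leaf meets each center plane $x + E^c$ in exactly one point. Fix a lift $F$ of $f$ with $F - A$ bounded and $\Z^N$ periodic. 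Let $\pi \colon \R^N \to E^c$ send a point $x$ to the point where the leaf of $\widetilde{\mathcal{W}}^{su}$ through $x$ meets $E^c$. Then $\pi$ is continuous and constant on leaves. Since the foliation is $\Z^N$ invariant, each $n \in \Z^N$ acts on the leaf space $E^c$ by a homeomorphism $T_n$ with $\pi(x+n) = T_n(\pi(x))$. Likewise, $F$ induces a homeomorphism $\bar F$ of $E^c$ with $\pi \circ F = \bar F \circ \pi$, and $\bar F T_n = T_{An} \bar F$.

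The heart of the argument is to show that the holonomy action $n \mapsto T_n$ of $\Z^N$ on the plane $E^c$ is topologically conjugate to the translation action $n \mapsto (\cdot + n^c)$, by a conjugacy $h$ of $E^c$ that also conjugates $\bar F$ to $A^c$. This is where Lemma \ref{lem:density} is used. The action $n \mapsto T_n$ is free and commutative, and $T_n$ is at bounded distance from translation by $n^c$. By density of $\{n^c\}$, together with the fact that the action on the quotient is minimal, one obtains that the closure of $\{T_n\}$ is a continuous $\R^2$-flow extending the action. In \cite{AU} this is done via Herman and Moser linearization. Its orbit map gives $h$, normalized by $h(\pi(x)) - x^c$ bounded and $h \circ T_n = h + n^c$. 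Uniqueness of such an $h$ then forces $h \bar F = A^c h$. In \cite{RH} exactly this density was deduced from irreducibility. Now it is supplied for every ergodic $A$ by Lemma \ref{lem:density}.

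Next we construct the conjugacy. On the hyperbolic part we use the Franks–Manning type semiconjugacy: there is a unique continuous $\Z^N$ equivariant map $\Phi^{su} \colon \R^N \to E^s \oplus E^u$, bounded distance from the projection, with $\Phi^{su} F = A \Phi^{su}$. It is obtained by the usual contraction on bounded functions, using only hyperbolicity of $A$ on $E^s \oplus E^u$. Set
\[
\widetilde H(x) = \Phi^{su}(x) + h(\pi(x)).
\]
This map is $\Z^N$ equivariant, at bounded distance from the identity, and satisfies $\widetilde H F = A \widetilde H$. It therefore descends to a continuous map $H$ homotopic to the identity with $H f = A H$.

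It remains to prove that $H$ is injective, which is the main obstacle. Surjectivity follows from degree, since $H$ is homotopic to the identity. Suppose $\widetilde H(x) = \widetilde H(y)$. Then $h(\pi x) = h(\pi y)$, so $x$ and $y$ lie on the same $su$ leaf. Along that leaf, which is the union of strong stable and strong unstable plaques (joint integrability), we have $\Phi^{su}(F^k x) - \Phi^{su}(F^k y) = A^k(\cdots)$. Hence $\Phi^{su}$ is injective on each leaf: the leaf maps to a graph over $E^s \oplus E^u$, and points on a common strong stable leaf separate under backward iteration unless they are equal, by expansiveness of $F$ restricted to leaves. If time allows, injectivity of $\Phi^{su}$ on leaves can alternatively be obtained by showing that it is a proper local homeomorphism of each leaf onto the affine leaf, using the product structure. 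This yields $x = y$, so $H$ is a homeomorphism.
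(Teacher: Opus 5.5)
Your architecture is the paper's: a leaf-space projection onto $E^c$, the holonomy action $n \mapsto T_n$ linearized by a cited Herman--Moser homeomorphism $h$ (the paper's Fact 2), hyperbolic components from the cohomological equation (your $\Phi^{su}$ is the paper's $h^s + h^u$), and injectivity checked on joint leaves. There is a genuine gap in the injectivity step, which you rightly call the main obstacle.

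\textbf{Injectivity.} Your argument shows at most that $\Phi^{su}$ is injective along each single strong stable leaf, and symmetrically along each strong unstable leaf. If $\widetilde H(x) = \widetilde H(y)$, you only know that $x$ and $y$ lie on the same joint leaf, not on a common strong leaf. Injectivity on every strong stable leaf and every strong unstable leaf separately does not give injectivity on the joint leaf: a map of the plane can be injective on every horizontal and every vertical line without being injective. Neither the sentence ``the leaf maps to a graph over $E^s \oplus E^u$'' nor the proper-local-homeomorphism alternative supplies this, since the latter needs the same ingredient to get local injectivity.

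What is missing is the following.
\begin{enumerate}
\item Split $\Phi^{su}$ into its $E^s$ and $E^u$ components $h^s, h^u$, and show that $h^s$ is constant along strong unstable leaves. If $y \in \widetilde{W}^u(x)$, then $d(F^{-k}x, F^{-k}y) \to 0$, so by uniform continuity $h^s(F^{-k}x) - h^s(F^{-k}y) \to 0$. But this difference equals $(A^s)^{-k}(h^s(x) - h^s(y))$, which blows up unless it vanishes. Symmetrically, $h^u$ is constant along strong stable leaves.
\item Use the global product structure of the joint leaves. ``Union of strong stable and strong unstable plaques'' is not enough; you need $\widetilde{W}^s(x) \cap \widetilde{W}^u(y)$ to be exactly one point $z$.
\item Then $h^s(z) = h^s(y) = h^s(x)$, so $z = x$ by injectivity of $h^s$ on $\widetilde{W}^s(x)$. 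Hence $y \in \widetilde{W}^u(x)$, and $h^u(y) = h^u(x)$ forces $y = x$.
\end{enumerate}

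\textbf{Normalizing $h$.} A homeomorphism $h$ with $h \circ T_n = h + n^c$ and $h(\pi(x)) - x^c$ bounded is not unique. Your normalization is invariant under adding a constant vector, so $h$ is unique only up to translation. The density argument, which is where the paper actually uses Lemma \ref{lem:density}, shows only that $(A^c)^{-1} \circ h \circ \bar F$ and $h$ differ by a constant. So you get $h \bar F h^{-1} = A^c + \beta$, not $A^c$, and $\widetilde H$ as written conjugates $F$ to $x \mapsto Ax + \beta$. You must replace $h$ by $h + (A^c - I)^{-1}\beta$, which is possible because $1$ is not an eigenvalue of $A^c$; this is the paper's Step 2.

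\textbf{Minor point.} Your account of where the linearization comes from (the closure of $\{T_n\}$ being an $\R^2$-flow by density and minimality) is only a heuristic. The equicontinuity it requires is exactly the Herman--Moser input, which the paper, like you, cites rather than proves.
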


Since $f$ is $C^0$ close to $A$, it is homotopic to $A$, so the lift $F$ of $f$ with $F - A$ bounded satisfies
\[
F(x + n) = F(x) + A n, \qquad n \in \Z^N ,
\]
and $\psi := F - A$ is continuous, $\Z^N$ periodic and bounded. We denote by $\widetilde{W}^s(x)$ and $\widetilde{W}^u(x)$ the lifted strong stable and strong unstable leaves of $f$ through $x \in \R^N$ and by $\widetilde{C}(x)$ the lifted accessibility class, which in the jointly integrable case is the leaf through $x$ of the lifted joint foliation. We make use the following three facts, all established in \cite[Section 6]{AU} and \cite[Sections 3 to 6]{RH} for maps in $\mathcal{U}$ in the jointly integrable case.

\begin{enumerate}[label=Fact \arabic*:]
    \item Fibration structure. The center leaf $W^c(0)$ of $f$ through the origin of $\R^N$ is a two dimensional topological manifold which is a graph over $E^c$, so that the projection $\pi^c$ restricts to a homeomorphism of $W^c(0)$ onto $E^c$, and there is a continuous surjection $\pi^{su} \colon \R^N \to W^c(0)$ whose fibers are exactly the lifted accessibility classes. Identifying $W^c(0)$ with $E^c$ through $\pi^c$, we consider the composition $\bar\pi = \pi^c \circ \pi^{su}$ as a continuous surjection of $\R^N$ onto $E^c$ whose fibers are the lifted accessibility classes. For every $n \in \Z^N$ the formula
\[
\bar\pi(x + n) = T_n\big( \bar\pi(x) \big)
\]
defines a homeomorphism $T_n$ of $E^c$, and in the jointly integrable case these maps satisfy $T_{n + m} = T_n \circ T_m$ for all $n, m \in \Z^N$, so that $n \mapsto T_n$ is an action of $\Z^N$ on $E^c$.

\item Linearization of this action. By the construction of \cite[Section 6]{RH}, carried out in the present setting in \cite[Section 6]{AU} through the Diophantine estimates of \cite[Lemmas 17 and 18]{AU} and the theorems of Herman and Moser, there is a homeomorphism $h$ of $E^c$ such that
\[
h \circ T_n \circ h^{-1} = R_{n^c} \qquad \text{for every } n \in \Z^N ,
\]
where $R_v$ denotes the translation of $E^c$ by the vector $v$.

\item Graph and product structure of the strong leaves. Each lifted strong stable leaf is a Lipschitz graph over a translate of $E^s$ with slope bounded uniformly over all leaves, so that $\pi^s$ restricts to a homeomorphism of each leaf onto $E^s$, and there are constants $c > 0$ and $\sigma > 1$ such that
\[
\big| \pi^s\big(F^{-k} y\big) - \pi^s\big(F^{-k} z\big) \big| \geq c \, \sigma^k \, \big| \pi^s(y) - \pi^s(z) \big|
\]
for all $k \geq 0$ and all $y, z$ on the same lifted strong stable leaf. The symmetric statements hold for the strong unstable leaves under forward iteration. Moreover, in the jointly integrable case each lifted leaf of the joint foliation carries a global product structure, meaning that for $x$ and $y$ on the same leaf the intersection $\widetilde{W}^s(x) \cap \widetilde{W}^u(y)$ consists of exactly one point.

\end{enumerate}

\begin{proof}[Proof of Proposition \ref{prop:conjugacy}]
We divide the construction into five steps.

\begin{enumerate}[label=Step \arabic*:]
    \item We fist establish the quivariance of the fibration. The map $F$ sends lifted strong stable leaves of $f$ to lifted strong stable leaves and likewise for the unstable ones, so it sends lifted accessibility classes to lifted accessibility classes, that is $F(\widetilde{C}(x)) = \widetilde{C}(F(x))$. Consequently there is a well defined continuous map $P$ of $E^c$ with
\[
\bar\pi \circ F = P \circ \bar\pi ,
\]
namely $P(v) = \bar\pi(F(x))$ for any $x$ with $\bar\pi(x) = v$, and $P$ is a homeomorphism with inverse induced by $F^{-1}$ in the same way. From $F(x + n) = F(x) + An$ we obtain, for every $n \in \Z^N$ and every $x$,
\[
P\big( T_n (\bar\pi(x)) \big) = \bar\pi\big( F(x + n) \big) = \bar\pi\big( F(x) + An \big) = T_{An}\big( P(\bar\pi(x)) \big),
\]
and since $\bar\pi$ is surjective this gives $P \circ T_n = T_{An} \circ P$.

\item We normalize the induced center dynamics. Set $Q = h \circ P \circ h^{-1}$, a homeomorphism of $E^c$. Conjugating the relation of the first step by $h$ and using the second fact gives $Q \circ R_{n^c} = R_{(An)^c} \circ Q$, and $(An)^c = A^c n^c$ by the invariance of the splitting, so
\[
Q(v + n^c) = Q(v) + A^c n^c \qquad \text{for every } v \in E^c \text{ and } n \in \Z^N .
\]
The map $\psi_0(v) = Q(v) - A^c v$ is then continuous and satisfies $\psi_0(v + n^c) = \psi_0(v)$ for every $n$, hence it is invariant under a dense group of translations by Lemma \ref{lem:density} and is therefore constant, say $\psi_0 \equiv \beta$. Replacing $h$ by $R_w \circ h$ with $w = (A^c - I)^{-1} \beta$, which exists because one is not an eigenvalue of $A^c$ and which does not affect the second fact since translations commute with translations, replaces $Q$ by $R_w \circ Q \circ R_{-w}$, whose value at $v$ is $A^c v + \beta + (I - A^c) w = A^c v$. After this replacement we therefore have
\[
h \circ T_n \circ h^{-1} = R_{n^c} \quad \text{and} \quad h \circ P \circ h^{-1} = A^c .
\]

\item We establish the center component of the conjugacy. Define $h^c = h \circ \bar\pi \colon \R^N \to E^c$. Then $h^c$ is continuous and surjective, its fibers are the lifted accessibility classes, and by the first two steps
\[
h^c(x + n) = h^c(x) + n^c \quad \text{and} \quad h^c \circ F = A^c \circ h^c .
\]
In particular $h^c(x) - x^c$ is $\Z^N$ periodic and continuous, hence bounded.

\item We establish the hyperbolic components by solving cohomological equations. Let $\psi^s = \psi^s(x)$ and $\psi^u = \psi^u(x)$ denote the $E^s$ and $E^u$ components of $\psi(x) = F(x) - Ax$, which are continuous, $\Z^N$ periodic and bounded. Define
\[
\varphi^s(x) = - \sum_{j \geq 1} (A^s)^{j-1} \, \psi^s\big( F^{-j} x \big), \qquad \varphi^u(x) = \sum_{j \geq 0} (A^u)^{-j-1} \, \psi^u\big( F^{j} x \big) .
\]
Both series converge uniformly, since $\| (A^s)^{j} \| \leq C \theta^{j}$ and $\| (A^u)^{-j} \| \leq C \theta^{j}$ with $\theta \in (0,1)$ and the maps $\psi^s$, $\psi^u$ are bounded, so $\varphi^s$ and $\varphi^u$ are continuous and bounded. They are $\Z^N$ periodic because $F^{\pm j}(x + n) = F^{\pm j}(x) + A^{\pm j} n$ with $A^{\pm j} n \in \Z^N$ and $\psi^s$, $\psi^u$ are periodic. A direct computation gives
\[
\varphi^s(F x) = - \sum_{i \geq 0} (A^s)^{i} \, \psi^s\big( F^{-i} x \big) = A^s \varphi^s(x) - \psi^s(x),
\]
and similarly $\varphi^u(F x) = A^u \varphi^u(x) - \psi^u(x)$. Setting $h^s = \pi^s + \varphi^s$ and $h^u = \pi^u + \varphi^u$ we obtain, for every $x$ and $n$,
\begin{align*}
&h^s \circ F = A^s \circ h^s, \qquad h^u \circ F = A^u \circ h^u, \\ &h^s(x+n) = h^s(x) + n^s, \qquad h^u(x+n) = h^u(x) + n^u.
\end{align*}

\item We assemble the conjugacy. Define $H_1 = h^s + h^u + h^c \colon \R^N \to \R^N$, using the splitting $\R^N = E^s \oplus E^u \oplus E^c$ to interpret the sum. By the previous steps $H_1 \circ F = A \circ H_1$ and $H_1(x + n) = H_1(x) + n$ for every $n \in \Z^N$, and $H_1 - \mathrm{id} = \varphi^s + \varphi^u + (h^c - \pi^c)$ is continuous, $\Z^N$ periodic and bounded.

We claim that $H_1$ is injective. Note first that $h^s$ is uniformly continuous, being the sum of a linear map and a periodic continuous map, and constant on lifted strong unstable leaves. Indeed, if $y \in \widetilde{W}^u(x)$ then $d(F^{-k}x, F^{-k}y) \to 0$, while
\[
h^s\big(F^{-k}x\big) - h^s\big(F^{-k}y\big) = (A^s)^{-k}\big( h^s(x) - h^s(y) \big),
\]
whose norm is at least $C^{-1} \theta^{-k} \, | h^s(x) - h^s(y) |$ and hence tends to infinity unless $h^s(x) = h^s(y)$. Uniform continuity forces the left hand side to tend to zero, so $h^s(x) = h^s(y)$. Next, $h^s$ is injective on each lifted strong stable leaf. If $y \neq z$ lie on the same lifted strong stable leaf then $\pi^s(y) \neq \pi^s(z)$ by the graph structure, and if $h^s(y) = h^s(z)$ then $h^s(F^{-k}y) = (A^s)^{-k} h^s(y) = h^s(F^{-k}z)$ for all $k$, so
\[
\pi^s\big(F^{-k}y\big) - \pi^s\big(F^{-k}z\big) = \varphi^s\big(F^{-k}z\big) - \varphi^s\big(F^{-k}y\big)
\]
is bounded, contradicting the backward expansion of the third fact. The symmetric statements hold for $h^u$, which is constant on lifted strong stable leaves and injective on each lifted strong unstable leaf. Now suppose $H_1(x) = H_1(y)$. Comparing center components gives $h^c(x) = h^c(y)$, so $x$ and $y$ lie on the same lifted accessibility class, and by the global product structure there is a unique point $z \in \widetilde{W}^s(x) \cap \widetilde{W}^u(y)$. Then $h^s(z) = h^s(y) = h^s(x)$, the first equality because $z$ and $y$ lie on the same strong unstable leaf and the second by assumption, so $z = x$ by injectivity of $h^s$ on the strong stable leaf through $x$. Hence $y \in \widetilde{W}^u(x)$, and $h^u(y) = h^u(x)$ forces $y = x$ by injectivity of $h^u$ on that leaf. This proves the claim.

Since $H_1$ is continuous and injective it is an open map by invariance of domain, and since $H_1 - \mathrm{id}$ is bounded the map $H_1$ is proper, hence closed. Its image is therefore open, closed and nonempty, so $H_1$ is a bijection of $\R^N$, and an open continuous bijection is a homeomorphism. Since $H_1(x+n) = H_1(x) + n$, the map $H_1$ descends to a homeomorphism $H$ of $\T^N$, homotopic to the identity through the maps $\mathrm{id} + t(H_1 - \mathrm{id})$, and the relation $H_1 \circ F = A \circ H_1$ descends to $H \circ f = A \circ H$.
\end{enumerate}
\end{proof}

\begin{remark}\label{rem:regularity}
The conjugacy $H$ is in general only continuous. This is precisely the regularity relevant for the applications below, since the affine rigidity of Section 3 is purely topological. The $C^1$ regularity available in the jointly integrable case, coming from the theorems of Herman and Moser, attaches to the conjugacy between the strong foliations of $f$ and the linear foliations of $A$. That is, to the map $x \mapsto x^s + x^u + h^c(x)$ considered in \cite{RH} and \cite{AU}, and not to the dynamical conjugacy $H$ itself.
\end{remark}

\section{Proofs of the main results}

We begin with the proposition which gives the exclusivity of the alternative in Theorem \ref{thm:main}. For volume preserving smooth perturbations sufficiently close to the automorphism the equivalence between the failure of accessibility and topological conjugacy was observed in \cite{Sa} building on \cite{RH}. The version below requires neither closeness of $f$ to $A$ nor volume preservation, the conjugating homeomorphism is not assumed to be homotopic to the identity, and the proof is elementary and self contained. The argument combines \emph{the metric characterization of the stable and unstable sets of the linear model} with the countability of the projection of the lattice to the center space, which is a weak form of the property expressed by Lemma \ref{lem:density}.

\begin{proposition}\label{prop:exclusive}
Let $A \in \GL(N,\Z)$ induce an ergodic automorphism of $\T^N$ with $E^c \neq \{0\}$ and with semisimple center restriction $A^c$. Let $f$ be a partially hyperbolic diffeomorphism of $\T^N$ and let $H'$ be a homeomorphism of $\T^N$ with $H' \circ f = A \circ H'$. Then $H'$ maps every accessibility class of $f$ into a coset of $p(E^s \oplus E^u)$, which is a proper subgroup of $\T^N$. In particular $f$ does not have the accessibility property.
\end{proposition}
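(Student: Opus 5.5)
The plan is to show that $H'$ sends each strong stable leaf of $f$ into a single stable set of $A$, and each strong unstable leaf into a single unstable set of $A$. We then identify these stable and unstable sets of the linear model metrically and intersect with the countability of $\{n^c : n \in \Z^N\}$.

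\emph{Step 1: metric characterization.} For $A$ on $\T^N$, write $d$ for the flat metric and, in $E^c$, use a norm preserved by $A^c$; such a norm exists by semisimplicity of $A^c$, as in the proof of Lemma \ref{lem:transfer}. Let $x, y \in \T^N$ with $d(A^k x, A^k y) \to 0$ as $k \to \infty$. Choose lifts $\tilde x, \tilde y$ and integer vectors $n_k$ such that $\delta_k = A^k(\tilde y - \tilde x) - n_k$ tends to $0$.
\begin{itemize}[label={}]
\item Write $v = \tilde y - \tilde x$. Then $n_{k+1} - A n_k = A\delta_k - \delta_{k+1} \to 0$, and since it is an integer vector, $n_{k+1} = A n_k$ for $k \geq k_0$.
\item Hence $A^{k}(v - A^{-k_0}n_{k_0})$ tends to $0$ for $k \ge k_0$, because it equals $\delta_k$.
\item Since $A^c$ is an isometry in the chosen norm and $A^u$ expands, $u = v - A^{-k_0}n_{k_0}$ lies in $E^s$.
\end{itemize}
So $\tilde y - \tilde x \in E^s + \Z^N$, that is, $y \in x + p(E^s)$. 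Symmetrically, backward asymptotic points differ by an element of $p(E^u)$.

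\emph{Step 2: transport by $H'$.} Both $H'$ and $H'^{-1}$ are uniformly continuous on the compact torus. If $y \in W^s_f(x)$, then $d(f^k x, f^k y) \to 0$, so $d(A^k H'x, A^k H'y) = d(H' f^k x, H' f^k y) \to 0$. By Step 1, $H'y \in H'x + p(E^s)$; the unstable case is the same with $f^{-1}$. Now let $y \in C(x)$. Concatenating the finitely many su-arcs joining $x$ to $y$ gives $H'(C(x)) \subseteq H'(x) + p(E^s \oplus E^u)$, using that $p(E^s \oplus E^u)$ is a subgroup.

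\emph{Step 3: properness.} It remains to see that $p(E^s\oplus E^u)$ is a proper subgroup. Suppose it were all of $\T^N$. Then $\R^N = E^s \oplus E^u + \Z^N$, so projecting along $E^s \oplus E^u$ onto $E^c$ gives $E^c = \{n^c : n \in \Z^N\}$. This is a countable set, while $E^c \neq \{0\}$ is uncountable, a contradiction. Consequently each $H'(C(x))$ is contained in a proper coset, so $C(x) \neq \T^N$ because $H'$ is a bijection, and $f$ is not accessible.

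The main obstacle is Step 1. There one has to check carefully that the integer corrections $n_k$ stabilize to $A$-orbits, and that the center component cannot tend to zero. Semisimplicity of $A^c$ is exactly what is used: with a Jordan block of modulus-one eigenvalues the norms still do not shrink, but the isometric norm makes it immediate. Note that no partial hyperbolicity rates of $f$ are used, only the definition of strong leaves as asymptotic sets.
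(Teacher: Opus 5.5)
Your proof is correct and follows essentially the same route as the paper. Both arguments characterize the stable and unstable sets of $A$ metrically, using integer corrections that eventually follow an $A$-orbit; both transport along su-paths by uniform continuity of $H'$; and both get properness of $p(E^s\oplus E^u)$ from countability of $\{n^c : n\in\Z^N\}$. The only cosmetic difference is that you work directly with the difference $\tilde y-\tilde x$ of a pair, whereas the paper first treats $d(A^n w,0)$ and then invokes translation invariance of the metric.
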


\begin{proof}
We first describe the metric stable set of the linear model. We claim that for $w \in \T^N$ one has $d(A^n w, 0) \to 0$ as $n \to +\infty$ if and only if $w \in p(E^s)$, and $d(A^n w, 0) \to 0$ as $n \to -\infty$ if and only if $w \in p(E^u)$. One implication is cleear, since for $x \in E^s$ the orbit $A^n x$ converges to the origin in $\R^N$ and the projection $p$ does not increase distances. For the converse, suppose $d(A^n w, 0) \to 0$ and choose for every $n$ a lift $x_n$ of $A^n w$ with $|x_n| = d(A^n w, 0)$. The vector $A x_n$ is a lift of $A^{n+1} w$, so $m_n = x_{n+1} - A x_n$ belongs to $\Z^N$ and satisfies $|m_n| \leq |x_{n+1}| + \|A\| \, |x_n| \to 0$, hence $m_n = 0$ for all $n \geq n_0$. Therefore $x_n = A^{\,n - n_0} x_{n_0}$ for $n \geq n_0$ and $x_n \to 0$. Write $x_{n_0} = v^s + v^c + v^u$. The unstable component of $x_n$ equals $(A^u)^{\,n - n_0} v^u$ and satisfies $|(A^u)^{k} v^u| \geq C^{-1} \theta^{-k} |v^u|$ with $\theta \in (0,1)$, which is compatible with $x_n \to 0$ only if $v^u = 0$. The center component of $x_n$ equals $(A^c)^{\,n - n_0} v^c$ and has constant norm with respect to a norm invariant under $A^c$, which exists by semisimplicity as in the proof of Lemma \ref{lem:transfer}, so $v^c = 0$ as well. Hence $x_{n_0} \in E^s$, so $A^{n_0} w = p(x_{n_0}) \in p(E^s)$, and since $E^s$ is invariant under $A^{-1}$ we conclude $w = p(A^{-n_0} x_{n_0}) \in p(E^s)$. The statement for $n \to -\infty$ follows by applying the same argument to $A^{-1}$, whose center restriction is again semisimple with eigenvalues of modulus one. Since the metric on $\T^N$ is invariant under translations and $A^n x - A^n y = A^n (x - y)$, it follows that $d(A^n x, A^n y) \to 0$ as $n \to +\infty$ if and only if $y - x \in p(E^s)$, and similarly for $n \to -\infty$ with $p(E^u)$.

Now let $x \in \T^N$ and let $y$ belong to the strong stable leaf of $f$ through $x$. Then $d(f^n x, f^n y) \to 0$ as $n \to +\infty$, and since $H'$ is uniformly continuous on the compact torus,
\[
d\big( A^n H'(x), \, A^n H'(y) \big) = d\big( H'(f^n x), \, H'(f^n y) \big) \longrightarrow 0 ,
\]
so $H'(y) - H'(x) \in p(E^s)$ by the first part of the proof. In the same way, if $y$ belongs to the strong unstable leaf of $f$ through $x$ then $H'(y) - H'(x) \in p(E^u)$. If $y$ lies in the accessibility class of $x$ there is a chain $x = z_0, z_1, \dots, z_k = y$ in which each $z_{i+1}$ belongs to the strong stable or strong unstable leaf of $f$ through $z_i$, and summing the corresponding differences gives
\[
H'(y) - H'(x) = \sum_{i=0}^{k-1} \big( H'(z_{i+1}) - H'(z_i) \big) \in p(E^s) + p(E^u) = p(E^s \oplus E^u) ,
\]
since $p$ is a group homomorphism. Hence $H'$ maps the accessibility class of $x$ into $$H'(x) + p(E^s \oplus E^u).$$

It remains to check that $p(E^s \oplus E^u)$ is a proper subgroup of $\T^N$. It is a subgroup, being the image of a subspace under the projection homomorphism. If it were the whole torus we would have $\R^N = (E^s \oplus E^u) + \Z^N$, and applying the projection onto $E^c$ along $E^s \oplus E^u$ would give $E^c = \{ n^c \, \colon \, n \in \Z^N \}$, exhibiting the uncountable set $E^c$ as countable, a contradiction. Finally, if $f$ had the accessibility property, the accessibility class of any point would be the whole torus, and surjectivity of $H'$ would force $\T^N = H'(x) + p(E^s \oplus E^u)$, contradicting properness of the subgroup.
\end{proof}

\begin{remark}
Applied with $f = A$ and $H'=Id$, Proposition \ref{prop:exclusive} shows that no ergodic automorphism with nontrivial center space and semisimple center restriction has the accessibility property. Its accessibility classes being contained in the cosets of $p(E^s \oplus E^u)$. Accessibility is not in general invariant under topological conjugacy, since it is defined through the strong foliations, and the content of the proposition is that conjugacy to the linear model nevertheless forces the strong foliations of $f$ into the metrically defined stable and unstable sets of $A$.
\end{remark}

\begin{proof}[Proof of Theorem \ref{thm:main}]
Let $f \in \mathcal{U}$ and let $f$ does not have the accessibility property. By Proposition \ref{prop:conjugacy} there is a homeomorphism $H$ of $\T^N$ homotopic to the identity with $H \circ f \circ H^{-1} = A$. Let $g$ be any homeomorphism of $\T^N$ commuting with $f$ and set $\hat g = H \circ g \circ H^{-1}$. Then
\[
\hat g \, A = H \circ g \circ H^{-1} \circ H \circ f \circ H^{-1} = H \circ g \circ f \circ H^{-1} = H \circ f \circ g \circ H^{-1} = A \, \hat g ,
\]
so $\hat g$ is a homeomorphism commuting with $A$, and $A$ satisfies the hypotheses of Section 3 by Lemma \ref{lem:linearalgebra}. By Proposition \ref{prop:affine} the map $\hat g$ is affine, $\hat g(x) = B x + w$ with $B$ the matrix induced by $\hat g$ on the first homology group and $w \in \Fix(A)$. Since $H$ is homotopic to the identity it acts trivially on homology, so $B$ is also the matrix induced by $g$ itself. Conjugation by $H$ therefore maps $Z(f)$ into $\Aff_A$, and it maps onto $\Aff_A$ because every element of $\Aff_A$ commutes with $A$ by Corollary \ref{cor:centralizerA}, so its conjugate by $H^{-1}$ commutes with $f$. Being a conjugation, this map is a group isomorphism. This proves that at least one of the two assertions holds. They cannot hold simultaneously, since the second provides a topological conjugacy between $f$ and $A$, and Proposition \ref{prop:exclusive} then shows that $f$ does not have the accessibility property.
\end{proof}

\begin{proof}[Proof of Corollary \ref{cor:equivalence}]
First suppose the strong stable and strong unstable bundles of $f$ are jointly integrable. Every strong stable and every strong unstable leaf of $f$ is contained in a leaf of the foliation integrating $E^s_f \oplus E^u_f$, so every accessibility class of $f$ is contained in a single leaf. A leaf is an injectively immersed submanifold of dimension $N - 2$, hence a set of Lebesgue measure zero and in particular a proper subset of the torus, so $f$ is not accessible. Conversely, if $f$ is not accessible then the bundles are jointly integrable by Theorem \ref{thm:au}. Moreover, if $f$ is not accessible then $f$ is topologically conjugate to $A$ by Proposition \ref{prop:conjugacy}. Finally, if $f$ is topologically conjugate to $A$ then $f$ is not accessible by Proposition \ref{prop:exclusive}. This settles the equivalence of the three statements.
\end{proof}

\begin{remark}\label{rem:uniqueness}
The homeomorphism $H$ in the second alternative is unique up to composition with a translation by an element of $\Fix(A)$. Indeed, if $H'$ is another homeomorphism homotopic to the identity with $H' \circ f \circ H'^{-1} = A$, then $H' \circ H^{-1}$ commutes with $A$, is homotopic to the identity, and is therefore an affine map with linear part equal to the identity by Proposition \ref{prop:affine}, that is a translation by an element of $\Fix(A)$. In particular the isomorphism of Theorem \ref{thm:main} does not depend on the choice of $H$ up to an inner automorphism of $\Aff_A$.
\end{remark}

\begin{proof}[Proof of Corollary \ref{cor:structure}]
The first assertion follows at once from Theorem \ref{thm:main} and Corollary \ref{cor:centralizerA}, and the irreducible case is the last assertion of Corollary \ref{cor:centralizerA}. Suppose $N = 4$. By Lemma \ref{lem:quartic} the polynomial $p_A$ is irreducible, with roots $\lambda$, $\bar\lambda$ of modulus one and two further roots $\mu_1$, $\mu_2$. If $\mu_1$ and $\mu_2$ were nonreal they would form a conjugate pair with $|\mu_1| = |\mu_2| \neq 1$, while $|\det A| = 1$ forces $|\lambda|^2 |\mu_1 \mu_2| = 1$ and hence $|\mu_1|^2 = 1$, a contradiction. Therefore $\mu_1$ and $\mu_2$ are real, so $K$ has $r_1 = 2$ real embeddings and $r_2 = 1$ pair of complex embeddings, and the rank equals $r_1 + r_2 - 1 = 2$. The last assertion follows because $Z(f)$ contains $f$ together with the conjugates by $H^{-1}$ of the linear maps in $\mathcal{O}_A^{\times}$, which form a free abelian group of rank two up to finite index.
\end{proof}

\begin{proof}[Proof of Corollary \ref{cor:criterion}]
We prove it by contraposition. Assume that $f$ does not have the accessibility property. By Theorem \ref{thm:main} the group $Z(f)$ is isimorphic to $\Aff_A$. If $g \in Z(f)$ acts trivially on homology then $H \circ g \circ H^{-1}$ is a translation by an element of $\Fix(A)$ and hence has finite order, so $g$ has finite order. Moreover $\Aff_A$ is countable, and when $p_A$ is irreducible it is virtually abelian by Corollary \ref{cor:centralizerA}.
\end{proof}

\begin{proof}[Proof of Corollary \ref{cor:dynamics}]
By Theorem \ref{thm:main} we have $H \circ g \circ H^{-1}(x) = B x + w$ with $w \in \Fix(A)$. Since no eigenvalue of $B$ is a root of unity, one is not an eigenvalue of $B$, so $\det(B - I) \neq 0$ and the endomorphism of $\T^N$ induced by $B - I$ is surjective. Choose $v \in \T^N$ with $(B - I) v = -w$. A direct computation shows that conjugating $x \mapsto Bx + w$ by the translation by $v$ gives the automorphism induced by $B$. Hence $g$ is topologically conjugate to that automorphism. An ergodic automorphism of the torus is mixing with respect to Haar measure by \cite{Ha}, and Haar measure has full support, so the automorphism, and therefore $g$, is topologically mixing. The torsion points of $\T^N$ are dense and each subgroup of points of a fixed finite order is finite and invariant under $B$, so every torsion point is periodic for $B$, and density of periodic points follows for $g$ as well. Finally, the topological entropy of the automorphism induced by $B$ equals the sum of $\log|\beta|$ over the eigenvalues $\beta$ of $B$ with $|\beta| > 1$ counted with multiplicity, see \cite{Wb}, and topological entropy is invariant under topological conjugacy.
\end{proof}

We now list the consequences for group actions generated by commuting maps. Two commuting maps generate an action of $\Z^2$, and the statements below apply in particular to that situation.

\begin{proposition}\label{prop:actions}
Let $f \in \mathcal{U}$ and let $\mathcal{G}$ be any set of homeomorphisms of $\T^N$ each of which commutes with $f$. Then every measurable subset of $\T^N$ invariant under $f$ and under every element of $\mathcal{G}$ is either null or conull for Lebesgue measure. If moreover $f$ does not have the accessibility property, then one and the same homeomorphism homotopic to the identity conjugates $f$ to $A$ and every element of $\mathcal{G}$ to an affine transformation in $\Aff_A$.
\end{proposition}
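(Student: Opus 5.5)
The proof has two parts, and each reduces to earlier results.

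\emph{First assertion.} Any measurable set $S$ that is invariant under $f$ and under every element of $\mathcal{G}$ is in particular invariant under $f$ alone. By Theorem \ref{thm:au}, every $f \in \mathcal{U}$ is ergodic with respect to Lebesgue measure. Hence $m(S) \in \{0,1\}$. The maps in $\mathcal{G}$ play no role and need not preserve $m$, because invariance under a larger family is a stronger hypothesis.

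One point needs care: whether ``invariant'' means $f^{-1}(S) = S$ exactly or only modulo null sets. In both readings ergodicity of the volume preserving map $f$ gives the dichotomy. In the mod $0$ reading one uses the standard replacement of $S$ by $\bigcap_{n}\bigcup_{k\ge n} f^{-k}S$. This replacement is legitimate because $f$ preserves $m$, so $f^{-k}S$ differs from $S$ by a null set for every $k$.

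\emph{Second assertion.} Assume $f$ is not accessible. Take the single homeomorphism $H$ homotopic to the identity given by Proposition \ref{prop:conjugacy}, so that $H \circ f \circ H^{-1} = A$. The key observation is that $H$ depends only on $f$ and not on any commuting map. For each $g \in \mathcal{G}$, the computation in the proof of Theorem \ref{thm:main} shows that $H g H^{-1}$ commutes with $A$. Proposition \ref{prop:affine} then gives $H g H^{-1}(x) = Bx + w$ with $AB = BA$ and $w \in \Fix(A)$, that is, $H g H^{-1} \in \Aff_A$. Moreover $B$ is the homology action of $g$, since $H$ acts trivially on homology.

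Since the choice of $H$ is made once, before any $g$ is considered, the same homeomorphism works simultaneously for all of $\mathcal{G}$. By Remark \ref{rem:uniqueness}, any other admissible choice differs only by a translation in $\Fix(A)$, which does not affect membership in $\Aff_A$.

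I expect no real obstacle. The only subtle point is the measure-theoretic convention for invariance in the first part, and ergodicity of $f$ alone handles it.
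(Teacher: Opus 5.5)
Your proposal is correct and follows the paper's proof: the first assertion uses only the ergodicity of $f$ from Theorem \ref{thm:au}, and the second uses that the conjugacy $H$ of Proposition \ref{prop:conjugacy} and Theorem \ref{thm:main} depends only on $f$, so it works for every $g \in \mathcal{G}$ at once. Your extra remarks, on invariance modulo null sets and on the $\Fix(A)$ ambiguity in $H$, are correct but not needed.
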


\begin{proof}
Every set invariant under the whole family is in particular invariant under $f$, and $f$ is ergodic with respect to Lebesgue measure by Theorem \ref{thm:au}, which proves the first assertion. The second assertion is contained in Theorem \ref{thm:main}, since the homeomorphism $H$ there depends only on $f$.
\end{proof}

\begin{corollary}\label{cor:classes}
Let $f \in \mathcal{U}$ be without the accessibility property, let $H$ be as in Theorem \ref{thm:main}, and denote by $C(x)$ the accessibility class of $x$ for $f$. Then $H$ maps every accessibility class of $f$ onto the projection to the torus of an affine subspace of $\R^N$ parallel to $E^s \oplus E^u$. Moreover every $g \in Z(f)$ satisfies
\[
g\big( C(x) \big) = C\big( g(x) \big) \qquad \text{for every } x \in \T^N ,
\]
and under $H$ the induced permutation of the family of accessibility classes becomes the permutation of the family of the projected subspaces parallel to $E^s \oplus E^u$ induced by the corresponding affine transformation.
\end{corollary}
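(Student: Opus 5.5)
We combine the construction of $H$ in Proposition \ref{prop:conjugacy} with the inclusion of Proposition \ref{prop:exclusive}. Lift to $\R^N$ and write $H_1 = h^s + h^u + h^c$ as in Step 5 of that proof. The fibers of $h^c$ are exactly the lifted accessibility classes $\widetilde C(x)$. Hence $H_1(\widetilde C(x))$ lies in the affine subspace $h^c(x) + E^s \oplus E^u$, and this gives the inclusion half of the first claim directly; it is also the content of Proposition \ref{prop:exclusive}.

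For surjectivity onto that affine subspace, take a point $y$ with $y^c = h^c(x)$. Since $H_1$ is a homeomorphism of $\R^N$, there is $z$ with $H_1(z) = y$. Then $h^c(z) = y^c = h^c(x)$, so $z \in \widetilde C(x)$. Therefore $H_1(\widetilde C(x)) = h^c(x) + E^s \oplus E^u$ exactly. Projecting by $p$ gives that $H$ maps $C(x) = p(\widetilde C(x))$ onto $p(h^c(x) + E^s\oplus E^u)$. Here $C(x)$ is the projection of the lifted class, because accessibility paths lift to $\R^N$.

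For the equivariance, let $g \in Z(f)$. I would not work with strong leaves of $g$ directly, since $g$ is only a homeomorphism. Instead I would use the metric characterisation from the proof of Proposition \ref{prop:exclusive}, transported by $H$. For $y \in W^s(x)$ we have $d(f^n x, f^n y) \to 0$, and uniform continuity of $g$ gives $d(f^n gx, f^n gy) \to 0$. So it suffices to show that points $x, y$ with $d(f^n x, f^n y) \to 0$ lie in a common accessibility class.

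To see this, note that the condition $d(f^n x, f^n y) \to 0$ means $H(y) - H(x) \in p(E^s)$, by the first part of the proof of Proposition \ref{prop:exclusive} applied to $A$. By the first claim, $x$ and $y$ then lie in one class: $C(x)$ is the full $H$-preimage of the coset $H(x) + p(E^s \oplus E^u)$. This handles stable arcs, and unstable arcs are treated symmetrically. Chaining along the arcs of an accessibility path gives $g(C(x)) \subseteq C(g(x))$. Applying the same argument to $g^{-1}$ gives equality.

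An alternative route is even simpler. The first claim says that $C(x) = H^{-1}(H(x) + p(E^s\oplus E^u))$. Since $HgH^{-1}$ is affine, $x \mapsto Bx + w$, and $B$ preserves $E^s \oplus E^u$ by Lemma \ref{lem:linearalgebra}, the map $HgH^{-1}$ sends the coset $H(x) + p(E^s\oplus E^u)$ onto $HgH^{-1}(H(x)) + p(E^s\oplus E^u)$. This yields $g(C(x)) = C(g(x))$ together with the final statement about the induced permutation. I would present this route and keep the metric one as a remark.

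The only delicate point is the step showing that the class of a point equals the full preimage of its coset rather than just being contained in it. This is where the fiber property of $h^c$ from Fact 1 is essential, and I would state it carefully there.
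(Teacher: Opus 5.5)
Your proof is correct and follows the paper's route. The first claim comes from the fibers of $h^c$ being exactly the lifted classes together with bijectivity of $H_1$; your direct pullback of a point $y$ with $y^c = h^c(x)$ is a slightly cleaner version of the paper's partition argument. The equivariance then follows, as in the paper, from $H g H^{-1}$ being affine with linear part preserving $E^s \oplus E^u$ by Lemma \ref{lem:linearalgebra}. The metric route you keep as a remark is also valid but not needed.
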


\begin{proof}
We work with the lifted objects and the map $H_1$ of Proposition \ref{prop:conjugacy}. Points of a lifted accessibility class have the same value of $h^c$, so $H_1(\widetilde{C}(x)) \subseteq H_1(x) + E^s \oplus E^u$. If two distinct classes were mapped into the same affine subspace parallel to $E^s \oplus E^u$ their points would share the value of $h^c$, contradicting the fact that the fibers of $h^c$ are exactly the classes. Since $H_1$ is a bijection and both the classes and the affine subspaces partition $\R^N$, each subspace is covered by the image of the unique class mapped into it, which gives equality and, after projecting, the first assertion. Let $g \in Z(f)$ and write $H \circ g \circ H^{-1}(x) = Bx + w$ as in Theorem \ref{thm:main}. The matrix $B$ commutes with $A$ and therefore preserves $E^s \oplus E^u$ by Lemma \ref{lem:linearalgebra}, so the affine map $x \mapsto Bx + w$ maps the projection of every affine subspace parallel to $E^s \oplus E^u$ onto another such projection. Applying $H^{-1}$ on both sides gives $g(C(x)) = C(g(x))$ together with the description of the induced permutation.
\end{proof}

The exclusivity of the dichotomy combines with the smooth centralizer classification of Sandfeldt \cite{Sa} to yield a statement in which the two sides of the alternative are separated by centralizers in different categories. For a $C^\infty$ diffeomorphism $f$ we write $Z^\infty(f)$ for the group of $C^\infty$ diffeomorphisms commuting with $f$, and we call $Z^\infty(f)$ virtually trivial when the powers of $f$ generate a subgroup of finite index in it.

\begin{corollary}\label{cor:smooth}
Suppose $N = 4$, or $N = 6$ and $p_A$ is irreducible, and let $\mathcal{U}' \subseteq \mathcal{U}$ be a neighborhood of $A$ small enough in the $C^1$ topology for the results of \cite{Sa} to apply. Let $f \in \mathcal{U}'$ be of class $C^\infty$. Then exactly one of the following holds.
\begin{enumerate}
\item The diffeomorphism $f$ has the accessibility property and $Z^\infty(f)$ is virtually trivial.
\item The diffeomorphism $f$ does not have the accessibility property and $Z(f) \cong \Aff_A$ is virtually free abelian of rank $r_1 + r_2 - 1$.
\end{enumerate}
Moreover $Z^\infty(f)$ fails to be virtually trivial if and only if $f$ is $C^\infty$ conjugate to $A$, and in that case $f$ is not accessible.
\end{corollary}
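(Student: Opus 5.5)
The plan is to combine the exclusivity argument already established here with the smooth dichotomy of Sandfeldt \cite{Sa}, which I take as a black box. For $C^\infty$ volume preserving $f$ that is $C^1$ close to an ergodic automorphism with irreducible characteristic polynomial and two dimensional isometric center, it says the following. The image of $Z^\infty(f)$ in $\GL(N,\Z)$ under the homology representation has rank at most $r_1+r_2-1$, and either $Z^\infty(f)$ is virtually trivial, or $f$ is $C^\infty$ conjugate to $A$. I first check that its hypotheses hold. For $N=4$ irreducibility comes from Lemma \ref{lem:quartic}, and for $N=6$ it is assumed. The center is isometric by Lemma \ref{lem:linearalgebra}(2). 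I then shrink $\mathcal{U}'$ so that both Theorem \ref{thm:au} and \cite{Sa} apply.

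The central step is the final assertion. Suppose $f$ is $C^\infty$ conjugate to $A$, say $\Phi\circ f\circ\Phi^{-1}=A$. Then Proposition \ref{prop:exclusive} with $H'=\Phi$ shows that $f$ is not accessible. Moreover $Z^\infty(f)\supseteq \Phi^{-1}\circ C_{\GL(N,\Z)}(A)\circ\Phi$, and this group has rank $r_1+r_2-1\geq 2$. To see $r_1+r_2-1\geq 2$, count embeddings. For $N=4$ we have $r_1=2$ and $r_2=1$ as in Corollary \ref{cor:structure}. For $N=6$ the field $K$ has at least one complex pair coming from $\lambda,\bar\lambda$ and a real pair $\mu,\mu^{-1}$ among the off-circle roots, so again $r_1+r_2-1\geq 2$. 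The powers of $f$ correspond to the cyclic group $\langle A\rangle$, which has rank one, so they cannot have finite index, and $Z^\infty(f)$ is not virtually trivial. Conversely, if $Z^\infty(f)$ is not virtually trivial, Sandfeldt's dichotomy gives the $C^\infty$ conjugacy. This proves the stated equivalence.

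The two alternatives then follow. If $f$ is accessible, $f$ is not $C^\infty$ conjugate to $A$, because conjugacy would make $f$ non-accessible by Proposition \ref{prop:exclusive}. The previous paragraph then gives that $Z^\infty(f)$ is virtually trivial, which is alternative (1). If $f$ is not accessible, Theorem \ref{thm:main} gives $Z(f)\cong\Aff_A$. Irreducibility of $p_A$ together with Corollary \ref{cor:centralizerA} makes this group virtually free abelian of rank $r_1+r_2-1$, which is alternative (2). The alternatives are mutually exclusive simply because one asserts accessibility and the other its failure.

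The main obstacle is stating precisely what \cite{Sa} gives. Specifically, I must make sure that "virtually trivial" in its dichotomy is phrased the same way as here, meaning finite index of $\langle f\rangle$ in the full smooth centralizer rather than only in its image in homology. I also need that the $C^1$ neighborhood there can be intersected with $\mathcal{U}$. If Sandfeldt only controls the image in homology, I would add one step. For accessible $f$, an element of $Z^\infty(f)$ acting trivially on homology should have finite order or be a power of $f$ times something finite. I would try to get this from the uniqueness of the semiconjugacy of $f$ to $A$ provided by Lemma \ref{lem:transfer}.
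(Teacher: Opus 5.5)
Your argument is essentially the paper's. You verify Sandfeldt's hypotheses via Lemma \ref{lem:quartic} and Lemma \ref{lem:linearalgebra}, and use his dichotomy together with Proposition \ref{prop:exclusive} on the accessible side. On the non accessible side you use Theorem \ref{thm:main} and Corollary \ref{cor:centralizerA}, and the rank bound $r_1+r_2-1\ge 2$ gives the remaining direction of the final assertion. The paper also takes Sandfeldt's dichotomy, for the full group $Z^\infty(f)$, as a black box, so your contingency step is not needed.

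That contingency step could not work as sketched anyway. An accessible $f$ admits no surjective semiconjugacy onto $A$ at all, because the proof of Proposition \ref{prop:exclusive} uses only continuity and surjectivity of $H'$, never injectivity.

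One justification in your rank count is wrong. For $N=6$ the four roots off the unit circle need not contain a real pair $\mu,\mu^{-1}$. They can form a nonreal quadruple $\mu,\bar\mu,\mu^{-1},\bar\mu^{-1}$. For instance, $t^6+4t^4+t^3+4t^2+1=t^3P(t+t^{-1})$ with $P(x)=x^3+x+1$ is irreducible, has exactly one pair of roots on the circle, and gives $r_1=0$, $r_2=3$. The inequality you need still holds: $r_1+2r_2=6$ forces $r_2\le 3$, hence $r_1+r_2-1=5-r_2\ge 2$, with value $2$ or $4$.
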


\begin{proof}
For $N = 4$ the polynomial $p_A$ is irreducible by Lemma \ref{lem:quartic}, and in both cases the center restriction preserves a Euclidean inner product by Lemma \ref{lem:linearalgebra}, so $A$ satisfies the hypotheses of \cite{Sa}. The results of \cite{Sa} then give the following dichotomy for volume preserving $C^\infty$ diffeomorphisms sufficiently $C^1$ close to $A$. Either $Z^\infty(f)$ is virtually trivial, or $f$ is $C^\infty$ conjugate to $A$. Suppose $f$ has the accessibility property. By Proposition \ref{prop:exclusive} the map $f$ is not topologically conjugate to $A$, so in particular it is not $C^\infty$ conjugate to $A$, and the dichotomy forces $Z^\infty(f)$ to be virtually trivial. If instead $f$ does not have the accessibility property, then $Z(f) \cong \Aff_A$ by Theorem \ref{thm:main} and the rank statement is contained in Corollary \ref{cor:structure} and Corollary \ref{cor:centralizerA}. The two cases exclude each other by Theorem \ref{thm:main}.

For the final assertion, if $Z^\infty(f)$ is not virtually trivial then $f$ is $C^\infty$ conjugate to $A$ by the dichotomy of \cite{Sa}, hence not accessible by Proposition \ref{prop:exclusive}. Conversely, if $f$ is $C^\infty$ conjugate to $A$ then $Z^\infty(f)$ is isomorphic to $Z^\infty(A)$, which contains the affine transformations in $\Aff_A$ and is therefore not virtually trivial, since $\Aff_A$ is virtually free abelian of rank $r_1 + r_2 - 1$, which equals two when $N = 4$ by Corollary \ref{cor:structure} and is at least two when $N = 6$, because $r_1 + 2 r_2 = 6$ and $r_2 \geq 1$ give $r_1 + r_2 - 1 = 5 - r_2 \geq 2$.
\end{proof}

Since the linear automorphism $A$ itself belongs to $\mathcal{U}$ and, by Proposition \ref{prop:exclusive} applied with the identity conjugacy, does not have the accessibility property, the second alternative of Theorem \ref{thm:main} is nonvacuous, and applied to $f = A$ with $H$ the identity it recovers the classical description of the centralizer of $A$.

\section{Concluding remarks}

The alternative in Theorem \ref{thm:main} is a genuine dichotomy. The accessibility property is defined through the strong stable and strong unstable foliations and is not in general invariant under topological conjugacy, so the exclusivity of the two cases is not automatic. A perturbative form of the equivalence was noted in \cite{Sa} as a consequence of \cite{RH}. Proposition \ref{prop:exclusive} shows that conjugacy to the linear model constrains the strong foliations of the perturbation through the metric characterization of the stable and unstable sets of $A$, and that this constraint is incompatible with accessibility because $p(E^s \oplus E^u)$ is a proper subgroup of the torus. By Corollary \ref{cor:equivalence} the failure of accessibility within $\mathcal{U}$ is therefore equivalent to topological conjugacy to $A$, and Theorem \ref{thm:main} computes the centralizer on exactly one side of the dichotomy. \\

The conjugacy $H$ is in general only continuous, so the elements $H^{-1} \circ \alpha \circ H$ with $\alpha \in \Aff_A$ are a priori only homeomorphisms, and it is natural to ask which elements of $\Aff_A$ are realized by smooth diffeomorphisms commuting with a given nonlinear $f$ without the accessibility property. For irreducible $p_A$ this question connects the present results with the theorem of Sandfeldt \cite{Sa}, which asserts that a volume preserving diffeomorphism $C^1$ close to $A$ whose smooth centralizer has an image of maximal rank $r_1 + r_2 - 1$ in the automorphism group of homology is $C^\infty$ conjugate to $A$. In the jointly integrable case the topological centralizer always attains the maximal rank by Theorem \ref{thm:main}, so \emph{the gap between the topological and the smooth centralizer of such an $f$ measures precisely the failure of smoothness of $H$}, and one expects the smooth centralizer to be much smaller for many $f$, in analogy with the phenomena established by Damjanovi\'c, Wilkinson and Xu \cite{DWX} and with the program of \cite{DWWX}. On the accessible side the situation is governed by the circle of questions around the problem of Smale \cite{Sm} on the genericity of trivial centralizers, for which the strongest general result available is the theorem of Bonatti, Crovisier and Wilkinson \cite{BCW} in the $C^1$ topology. \\

A refinement of the centralizer question becomes available in the light of Theorem \ref{thm:main}. The classification of \cite{Sa} concerns the smooth centralizer $Z^\infty(f)$ on its own, and in dimensions four and six it shows that $Z^\infty(f)$ is either virtually trivial or of maximal rank, in which case $f$ is smoothly conjugate to $A$, without reference to the group of all commuting homeomorphisms, which was not known at the time. Theorem \ref{thm:main} identifies that larger group in the non accessible case, and every $C^\infty$ diffeomorphism commuting with $f$ is in particular a homeomorphism commuting with $f$, so the smooth centralizer becomes a subgroup $Z^\infty(f) \subseteq Z(f) \cong \Aff_A$. This makes the following question precise. Given a $C^\infty$ diffeomorphism $f$ in the non accessible case, which elements of $\Aff_A$ are realized by $C^\infty$ diffeomorphisms commuting with $f$? Here realized means that the topological conjugacy $H$ of Theorem \ref{thm:main} carries the commuting diffeomorphism to the given affine map, and the point is that $H$ is only continuous, so the smoothness of an element of $Z(f)$ is not automatic even though every affine map is itself smooth. Understanding the structure of the subgroup $Z^\infty(f)$, namely whether it has finite index, whether it is generated by the powers of $f$ and the finite order elements alone, or whether it is cut out by some algebraic condition on the linear parts, is an open and potentially tractable problem that bridges topological and smooth rigidity. \\

The results of Sections 2 and 3 use only ergodicity together with semisimplicity of the center restriction, and Lemma \ref{lem:density} is proved for every ergodic automorphism with no restriction on the dimension of the center space. The restriction to two dimensional center therefore enters only through the alternative of Theorem \ref{thm:au}. In particular the density statement is not an obstacle to ergodic automorphisms with center space of even dimension larger than two, which is precisely the situation excluded by the Salem number condition of \cite{AU} in dimensions six and nine. The remaining obstacles are of a different nature, namely the minimality criterion and the linearization of the holonomy action of $\Z^N$ on a center space of dimension larger than two, where the theorems of Herman and Moser used in \cite{RH} and \cite{AU} are no longer available.
Semisimplicity of the center restriction, which is automatic in dimension two by Lemma \ref{lem:linearalgebra}, also becomes a genuine additional hypothesis for the Lemma \ref{lem:transfer} when the center has dimension larger than two.
It would be interesting to determine, for a nonlinear $f \in \mathcal{U}$ without the accessibility property, the subgroup of $\Aff_A$ consisting of the elements realized by $C^1$ or by $C^r$ diffeomorphisms commuting with $f$. In particular to decide whether this subgroup can be strictly larger than the group generated by the powers of $f$ and the finite order elements. A second interesting concern is the regularity threshold, since the $C^{22}$ hypothesis of \cite{AU} comes from the theorems of Herman and Moser and there is no indication that it is necessary. Therefore any lowering of the regularity in the joint integrability alternative would immediately lower it in Theorem \ref{thm:main} and in Corollary \ref{cor:equivalence}.

\end{document}